\newcommand{\Z}{\ensuremath{\mathbb{Z}}\xspace}
\newcommand{\Q}{\ensuremath{\mathbb{Q}}\xspace}
\newcommand{\C}{\ensuremath{\mathbb{C}}\xspace}
\newcommand{\A}{\ensuremath{\mathbb{A}}\xspace}
\newcommand{\m}{\ensuremath{\mathfrak{m}}\xspace}
\newcommand{\M}{\ensuremath{\mathfrak{M}}\xspace}
\newcommand{\p}{\ensuremath{\mathfrak{p}}\xspace}
\newcommand{\OO}{\ensuremath{\mathscr{O}}\xspace}
\newcommand{\comment}[1]{}
\DeclareMathOperator{\Hom}{Hom}
\newtheorem{theorem}{Theorem}
\newtheorem{proposition}[theorem]{Proposition}
\newtheorem{lemma}[theorem]{Lemma}
\newtheorem{definition}[theorem]{Definition}
\newtheorem*{conjecture}{Conjecture} 
\newtheorem*{theor}{Theorem}
\newcommand{\OC}{\ensuremath{\mathbf{S}^D_{X}(U;r)}\xspace}
\newcommand{\OCdag}{\ensuremath{\mathbf{S}^D_{X}(U)^\dagger}\xspace}
\newcommand{\OCddag}{\ensuremath{\mathbf{V}^D_{X}(U)^\dagger}\xspace}
\newcommand{\OCN}{\ensuremath{\mathbf{S}^D_{X}(U_1(Np^\alpha);r)}\xspace}
\newcommand{\OCNl}{\ensuremath{\mathbf{S}^D_{X}(U_1(Np^\alpha)\cap U_0(l);r)}\xspace}
\newcommand{\OCV}{\ensuremath{\mathbf{S}^D_{X}(V;r)}\xspace}
\newcommand{\OCS}{\ensuremath{\mathbf{S}^D_{x}(U;r)}\xspace}
\newcommand{\OCdS}{\ensuremath{\mathbf{V}^D_{x}(U;r)}\xspace}
\newcommand{\OCVS}{\ensuremath{\mathbf{S}^D_{x}(V;r)}\xspace}
\newcommand{\OCVSd}{\ensuremath{\mathbf{V}^D_{x}(V;r)}\xspace}
\newcommand{\OCd}{\ensuremath{\mathbf{V}^D_{X}(U;r)}\xspace}
\newcommand{\OCdV}{\ensuremath{\mathbf{V}^D_{X}(V;r)}\xspace}
\newcommand{\s}{\ensuremath{^{\leq d}}\xspace}
\newcommand{\UU}{\ensuremath{U_1(Np^\alpha)}\xspace}
\newcommand{\VV}{\ensuremath{U_1(Np^\alpha)\cap U_0(l)}\xspace}
\begin{document}
\bibliographystyle{abbrv}
\title[Geometric level raising]{Geometric level raising for $p$-adic automorphic forms}
\author{James Newton}
\email{jjmn2@cam.ac.uk}
\begin{abstract}
We present a level raising result for families of $p$-adic automorphic forms for a definite quaternion algebra $D$ over $\Q$. The main theorem is an analogue of a theorem for classical automorphic forms due to Diamond and Taylor. We show that certain families of forms old at a prime $l$ intersect with families of $l$-new forms (at a non-classical point). One of the ingredients in the proof of Diamond and Taylor's theorem (which also played a role in earlier work of Taylor) is the definition of a suitable pairing on the space of automorphic forms. In our situation one cannot define such a pairing on the infinite dimensional space of $p$-adic automorphic forms, so instead we introduce a space defined with respect to a dual coefficient system and work with a pairing between the usual forms and the dual space. A key ingredient is an analogue of Ihara's lemma which shows an interesting asymmetry between the usual and the dual spaces.
\end{abstract}
\maketitle
\section{Introduction}
Classical level raising results typically show that if the reduction mod $p$ of a level $N$ modular form $f$ has certain properties (depending on a prime $l \ne p$), then there exists a modular form $g$ of level $Nl$, new at $l$, with $g \equiv f \mod p$. An example of a level raising result for classical modular forms is the following, due to Ribet \cite{MR804706}:
\begin{theor}
Let $f \in S_2(\Gamma_0(N))$ be an eigenform, and let
$\p|p$ be a finite place of $\bar{\Q}$ such that $p
\geq 5$ and $f$ is not congruent to an Eisenstein series modulo
$\p$. If $l \nmid Np$ is a prime number such that the following condition is satisfied,
$$
\text{$a_l(f)^2 \equiv (1+l)^2$ (mod $\p$),}
$$
then there exists a $l$-new eigenform $\tilde{f} \in
S_2(\Gamma_0(Nl))$ congruent to $f$ modulo $\p$.
\end{theor}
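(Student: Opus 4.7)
The plan is to analyse the Jacobian $J_0(Nl)$ together with the two degeneracy maps $\alpha, \beta \colon X_0(Nl) \to X_0(N)$, which induce $\phi = (\alpha^*, \beta^*) \colon J_0(N)^2 \to J_0(Nl)$ whose image is, up to isogeny, the $l$-old part of $J_0(Nl)$; the complementary quotient is the $l$-new abelian subvariety. To produce $\tilde f$ it will suffice to show that the maximal ideal $\m \subset T(N)$ attached to $f$ modulo $\p$ lifts to a maximal ideal $\tilde\m \subset T(Nl)$ whose localisation meets the $l$-new quotient nontrivially.

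The central geometric input is the analysis of $X_0(Nl)$ at the prime $l$. Its semistable reduction over $\Z_l$ consists of two copies of $X_0(N)_{\F_l}$ glued transversally at the supersingular points, and correspondingly the N\'eron model of $J_0(Nl)$ has a toric part with character group $X$ fitting into
$$0 \longrightarrow X \longrightarrow \Z[\mathrm{SS}] \longrightarrow \Z \longrightarrow 0,$$
where $\mathrm{SS}$ denotes the set of supersingular points of $X_0(N)_{\overline{\F}_l}$. By the Jacquet--Langlands correspondence, $X$ is Hecke-equivariantly identified with the (degree-zero part of the) space of weight-two automorphic forms of level $N$ on the definite quaternion algebra ramified precisely at $l$ and $\infty$.

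Next I would translate the congruence into a local statement. The hypothesis $a_l(f)^2 \equiv (l+1)^2 \pmod{\p}$ is equivalent to the Hecke polynomial $X^2 - a_l(f) X + l$ having roots reducing modulo $\p$ to $\{1,l\}$ (or $\{-1,-l\}$), which is precisely the condition that $\bar\rho_f$ restricted to a decomposition group at $l$ admits a Steinberg-type lift. The two $l$-old forms at level $Nl$ attached to $f$ thus have $U_l$-eigenvalues reducing modulo $\p$ to these values, and the eigenvalue $\pm 1$ is compatible with that of a hypothetical $l$-new eigenform, on which $U_l$ acts by $\pm 1$.

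The decisive ingredient will be Ihara's lemma: under the non-Eisenstein hypothesis on $\m$ together with $p \geq 5$, the Albanese map $J_0(Nl) \to J_0(N)^2$ has controlled kernel after localisation at $\m$, which forces the $\m$-localised character group $X_\m$ to be nonzero. This yields an $l$-new Hecke eigensystem in $X \otimes \overline{\F}_p$ matching that of $f$, and lifting to characteristic zero produces the required eigenform $\tilde f \in S_2(\Gamma_0(Nl))$ with $\tilde f \equiv f \pmod{\p}$. I expect the main obstacle to be the precise formulation and application of Ihara's lemma together with the careful bookkeeping of Hecke actions across the Jacquet--Langlands identification; the non-Eisenstein hypothesis and $p \geq 5$ enter exactly to guarantee the multiplicity-one statement needed at this step.
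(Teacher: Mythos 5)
This statement is not proved in the paper: it is Ribet's classical level-raising theorem, quoted in the introduction only for motivation and cited to \cite{MR804706}. The paper's own contribution (Theorem~\ref{supporttheorem}) is a $p$-adic \emph{analogue}, for families of automorphic forms on a definite quaternion algebra, proved by the Diamond--Taylor strategy of pairings and Ihara's lemma for quaternionic forms rather than via the Jacobian $J_0(Nl)$. So there is no ``paper's own proof'' to compare yours against at the level of this statement.

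That said, your outline is recognisably Ribet's strategy (degeneracy maps, the old/new dichotomy on $J_0(Nl)$, Ihara's lemma, non-Eisenstein hypothesis) blended with the Deligne--Rapoport/character-group picture and the Jacquet--Langlands identification of $X$ with quaternionic forms, which is the mechanism Diamond--Taylor (and this paper in the $p$-adic setting) use directly, without passing through the Jacobian. The two routes buy different things: the Jacobian route gives you the concrete mod-$p$ Galois representation and the geometry of bad reduction essentially for free, while the quaternionic route (as in the paper) generalises more readily to $p$-adic families because the spaces $L$, $M$ and their duals behave well as Banach $\OO(X)$-modules.

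The genuine gap is in the sentence ``Ihara's lemma\dots forces the $\m$-localised character group $X_\m$ to be nonzero.'' As written this conflates the Albanese kernel (the new subvariety) with the character group of the toric part, and more importantly it elides precisely the step where the work lies. The standard argument runs: the composite $\phi^\vee\phi$ on $J_0(N)^2$ (resp.\ $i^\dagger i$ on $L^2$ in the paper) is represented by $\begin{pmatrix} l+1 & T_l \\ T_l & l+1\end{pmatrix}$, whose determinant $(l+1)^2 - T_l^2$ lies in $\m$ by hypothesis; Ihara's lemma guarantees that the failure of $\phi$ to be injective on $p$-torsion (resp.\ the torsion of $M/iL^2$) is supported only at Eisenstein ideals; hence the non-invertibility of $\phi^\vee\phi$ at $\m$ cannot be absorbed into the old part and must be seen by the new quotient. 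Making this precise requires the chain of modules $\Lambda_0 \supset \Lambda_1 \supset \Lambda_2 \supset \Lambda_3$ and the three pairings $P_1, P_2, P_3$ that the paper sets up in sections~\ref{somemodules}--\ref{Ihara}, or their Jacobian-theoretic analogues in Ribet's and Diamond--Taylor's papers. You correctly flag this step as the ``main obstacle,'' but in its present form the sketch asserts the conclusion rather than deriving it, and the direction of the map whose kernel Ihara's lemma controls (Picard, not Albanese) should be stated carefully, since getting it backwards reverses the logic.
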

In this paper we prove an analogous level raising result for families of $p$-adic automorphic forms. In \cite{Bu1} and Part III of \cite{Bu2}, Buzzard defines modules of overconvergent $p$-adic automorphic forms for definite quaternion algebras, and constructs from these a so-called `eigencurve'. The eigencurve is a rigid analytic variety whose points correspond to certain systems of eigenvalues for Hecke algebras acting on these modules of automorphic forms. This space $p$-adically interpolates the systems of eigenvalues arising from classical automorphic forms. Emerton has constructed eigenvarieties in a cohomological framework \cite{Emint}, but in the following we will work with Buzzard's more concrete construction. We have also proved some cases of level raising for $p$-adic modular forms using the completed cohomology spaces investigated by Emerton (see \cite{cclr}).

The first construction of an eigencurve was carried out for modular forms (automorphic forms for $\mathrm{GL}_2$) in Coleman and Mazur's seminal paper \cite{CM}. An important recent result is the construction of a $p$-adic Jacquet-Langlands map between an eigencurve for a definite quaternion algebra and the $\mathrm{GL}_2$ eigencurve (interpolating the usual Jacquet-Langlands correspondence), as carried out in \cite{MR2111512}.

We follow the general approach of the first part of Diamond and Taylor's paper \cite{DT}, and our Theorem \ref{supporttheorem} is an analogue of \cite[Theorem 1]{DT}, but several new features appear in our work. In particular, the level raising results in \cite{Ta} and \cite{DT} for definite quaternion algebras are proved by utilising a pairing on finite dimensional vector spaces of automorphic forms. In our setting, the spaces of automorphic forms are Banach modules over an affinoid algebra, so we introduce spaces of `dual' automorphic forms and work with the pairing between the usual space of automorphic forms and the dual space. We then prove suitable forms of Ihara's lemma, our Theorem \ref{lemma:trivialtorsion} (cf. lemma 2 of \cite{DT}), for the usual and dual spaces of automorphic forms. An interesting asymmetry between the two situations can be observed.

This investigation of level raising results was motivated by a conjecture made by A. Paulin, prompted by results on local-global compatibility on the eigencurve in his thesis \cite{Pa}. Paulin's conjecture was made for the $\mathrm{GL}_2$-eigencurve; we may apply our theorem to the image of the $p$-adic Jacquet-Langlands map there to prove many cases of his conjecture. Since we have applications to the eigencurve for $\mathrm{GL}_2/\Q$ in mind we work with definite quaternion algebras over $\Q$ in this paper, but some of the methods of section \ref{mainbody} should apply to definite quaternion algebras over any totally real number field, although we do use the fact that weight space is one-dimensional in our arguments.
We end this introduction by stating the conjecture made by Paulin.

\subsection{A geometric level raising conjecture}
We fix two distinct primes $p$ and $l$, and an integer $N$ coprime to $pl$. Let $\mathscr{E}$ be the cuspidal Eigencurve of tame level $\Gamma_0(Nl)$, parameterising overconvergent cuspidal $p$-adic modular eigenforms (see \cite{Bu2} for its construction). If $\phi$ is a point of $\mathscr{E}$, corresponding to an eigenform $f_\phi$, Paulin defines an associated representation of $\mathrm{GL}_2(\Q_l)$, denoted $\pi_{f_\phi,l}$. We call an irreducible connected component $\mathscr{Z}$ of the Eigencurve \emph{generically special} if the $\mathrm{GL}_2(\Q_l)$-representations associated to the points of $\mathscr{Z}$ away from a discrete set are special. We define \emph{generically unramified principal series} similarly. Denote by $\alpha$ and $\beta$ the roots of the polynomial $X^2-t_lX+ls_l$, where $t_l$ and $s_l$ are the $T_l$ and $S_l$ eigenvalues of $f_\phi$. Paulin makes the conjecture:

\begin{conjecture}
Suppose $\mathscr{Z}$ is generically unramified principal series. Suppose further that there is a point $\phi$ on $\mathscr{Z}$ where the ratio of $\alpha$ and $\beta$ becomes $l^{\pm 1}$ and $\pi_{f_\phi,l}$ is special. Then there exists a generically special component $\mathscr{Z'}$ intersecting $\mathscr{Z}$ at $\phi$.
\end{conjecture}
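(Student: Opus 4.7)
The plan is to transfer the situation to the eigencurve for a suitable definite quaternion algebra via the $p$-adic Jacquet--Langlands map of \cite{MR2111512}, apply Theorem \ref{supporttheorem}, and then transfer back. This should prove the cases of the conjecture in which $\mathscr{Z}$ lies in the image of the $p$-adic JL map, which matches the scope claimed in the introduction.

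First I would choose a definite quaternion algebra $D/\Q$ ramified at infinity and at a finite set of primes disjoint from $\{l,p\}$ such that the generic points of $\mathscr{Z}$ correspond to automorphic representations that are discrete series at each finite ramification place of $D$. The $p$-adic JL map then lifts $\mathscr{Z}$ to an irreducible component $\widetilde{\mathscr{Z}}$ of the $D$-eigencurve of suitable tame level (incorporating $\Gamma_0(Nl)$ away from the ramification of $D$ and an Iwahori-type condition at the ramified primes), and lifts $\phi$ to a point $\widetilde{\phi}$ on $\widetilde{\mathscr{Z}}$. Under this dictionary, ``generically unramified principal series at $l$'' becomes ``generically $l$-old'' (the component comes from a lower tame level at $l$), while ``generically special at $l$'' becomes ``generically $l$-new''. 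The hypothesis $\alpha/\beta = l^{\pm 1}$ at $\phi$ is precisely the Ribet-type numerical condition that makes level raising possible, and the assumption that $\pi_{f_\phi,l}$ is special means that this numerical degeneration is genuinely realised at the level of the local representation rather than occurring trivially within a family of old forms.

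Next I would apply Theorem \ref{supporttheorem} at $\widetilde{\phi}$. Provided the usual technical hypotheses hold, in particular that the mod $p$ system of Hecke eigenvalues at $\widetilde{\phi}$ is non-Eisenstein so that the Ihara-type statement of Theorem \ref{lemma:trivialtorsion} is available for both the usual and the dual spaces of overconvergent forms, the theorem produces an irreducible component $\widetilde{\mathscr{Z}}'$ of the $D$-eigencurve which is generically $l$-new and passes through $\widetilde{\phi}$. Pushing $\widetilde{\mathscr{Z}}'$ forward via the $p$-adic JL map then yields the generically special component $\mathscr{Z}'$ of the $\mathrm{GL}_2$-eigencurve intersecting $\mathscr{Z}$ at $\phi$, as required.

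The main obstacle I anticipate is extracting from Theorem \ref{supporttheorem} the geometric conclusion in the precise form needed, namely that the $l$-new family produced is a genuinely distinct irreducible component passing through $\widetilde{\phi}$ rather than being absorbed into $\widetilde{\mathscr{Z}}$. This is exactly the step where the novel features of this paper, the pairing between the usual and dual spaces of overconvergent forms together with the asymmetric form of Ihara's lemma, must do the work: the pairing should detect that an $l$-old family and an $l$-new family have intersecting supports at $\widetilde{\phi}$ precisely because the level-raising relation $\alpha/\beta = l^{\pm 1}$ is satisfied, thereby forcing an honest intersection of two distinct components. Subsidiary obstacles include arranging $D$ so that $\mathscr{Z}$ lies in the image of the $p$-adic JL map (which is what is responsible for the ``many cases'' qualifier in the introduction) and verifying the non-Eisenstein and residual irreducibility hypotheses at $\phi$.
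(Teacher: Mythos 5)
Your proposal matches the paper's strategy exactly: transfer to the quaternion eigencurve via Chenevier's closed immersion $JL_p$, apply Theorem \ref{supporttheorem}, and transfer back. The conclusion you flag as the "main obstacle" — turning the support statement of Theorem \ref{supporttheorem} into an honest one-dimensional $l$-new family through $\widetilde\phi$ — is handled in the paper by extracting a height-one prime $\mathfrak{p} \subset \mathfrak{M}'_M$ in the support of $\ker(i^\dagger)$ and invoking Chenevier's local eigenvariety construction (Proposition 6.2.4 of \cite{Chenun}); this is exactly the device that converts a commutative-algebra support condition into a rigid-analytic family, and you are right to identify it as the key remaining step.

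One substantive correction to the proposal: the non-Eisenstein hypothesis in Theorem \ref{supporttheorem} is a condition on the maximal ideal $\mathfrak{M}$ of the $\OO(X)$-Hecke algebra, whose residue field is a finite extension of $\Q_p$, not a finite field of characteristic $p$. Accordingly it is a statement about the \emph{characteristic-zero} system of eigenvalues at $\phi$, detected via the attached $p$-adic Galois representation $\rho_\phi$ over $\overline{\Q}_p$ being irreducible. Since $\phi$ lies on the cuspidal eigencurve, $\rho_\phi$ is irreducible for every point $\phi$, so the non-Eisenstein hypothesis is automatic rather than a "technical hypothesis" to verify, and there is no residual (mod $p$) irreducibility assumption anywhere in the argument. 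This is one of the pleasant asymmetries of the $p$-adic setting: the Ihara-type control provided by Theorem \ref{lemma:trivialtorsion} together with the characteristic-zero Eisenstein dichotomy means the level-raising input is unconditional on the cuspidal eigencurve, unlike the classical results of Ribet and Diamond--Taylor where congruence-theoretic non-Eisenstein hypotheses are genuine restrictions. Your remark that the hypothesis "$\pi_{f_\phi,l}$ is special" rules out a trivial degeneration is also not quite how the paper uses it; the paper proves (Theorem \ref{raise}) that the $l$-stabilisation of $\phi$ attached to the eigenvalue $\alpha$ (with $\beta = l\alpha$) always lies in $\mathscr{Z}$ whenever $T_l^2 - (l+1)^2 S_l$ vanishes at $\phi$, and the "special" hypothesis of the conjecture enters only a posteriori to identify which of the two stabilisations the statement is about.
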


Chenevier raised the same question (in a slightly different form) in relation to the characterisation of the Zariski closure of the $l$-new classical forms in the eigencurve. We address this issue in section \ref{neweigen}. Finally, in a recent preprint \cite{NonComp} Paulin has proved versions of his level raising (and lowering) conjectures (even for ramified principal series). His techniques are completely different to ours, making use of deformation theory and requiring a recent important result of Emerton \cite{Emlg} showing that the space $X_{fs}$ constructed by Kisin in \cite{Kis} is equal to the $\mathrm{GL}_2$-eigencurve (if one restricts to pieces of the two spaces where certain conditions are satisfied by the relevant mod $p$ Galois representations).
\section{Modules of $p$-adic overconvergent automorphic forms and Ihara's lemma}\label{mainbody}
In this section we will prove the results we need about modules of $p$-adic overconvergent automorphic forms for quaternion algebras.

\subsection{Banach modules}
Let $K$ be a finite extension of $\Q_p$. We call a normed $K$-algebra $A$ a \emph{Banach algebra} if it satisfies the following properties:
\begin{itemize}
\item $A$ is Noetherian,
\item the norm $|-|$ is non-Archimedean,
\item $A$ is complete with respect to $|-|$,
\item for any $x,$ $y$ in $A$ we have $|xy|\le |x||y|$.
\end{itemize}  We will normally assume $A$ is a reduced affinoid algebra with its supremum norm. A \emph{Banach $A$-module} is an $A$-module $M$ endowed with a norm $|-|$ such that
\begin{itemize}
\item for any $a \in A$, $m \in M$ we have $|am|\le |a||m|$,
\item $M$ is complete with respect to $|-|$.
\end{itemize}
Given a set $I$ we define the Banach $A$-module $c_I(A)$ to be functions $f: I \rightarrow A$ such that $lim_{i\rightarrow \infty}f(i)=0$, with norm the supremum norm. By a finite Banach $A$-module we mean a Banach $A$-module which is finitely-generated as an abstract $A$-module.

Suppose $M$ is a Banach module over a Banach algebra $A$. We say that $M$ is \emph{ONable} if it is isomorphic (as a Banach module) to some $c_I(A)$. Note that this terminology differs slightly from that of \cite{Bu2}, where \emph{ONable} refers to modules \emph{isometric} to some $c_I(A)$ and \emph{potentially ONable} replaces our notion of ONable. The Banach $A$-module $P$ is said to satisfy the universal property $(Pr)$ if for every surjection $f: M\rightarrow N$ of Banach $A$-modules and continuous map $\alpha:P \rightarrow N$, $\alpha$ lifts to a continuous map $\beta:P \rightarrow M$ such that the below diagram commutes:
$$
\xymatrix{ 
 & M\ar@{->>}[d]^f\\
P\ar@{.>}[ur]^{\exists \beta}\ar[r]^\alpha & N
}
$$

As explained in \cite[2.1.4]{Urban}, the universal property $(Pr)$ is the property of being projective in the category of Banach $A$-modules, where the notion of projective is defined using \emph{strict} epimorphisms of Banach modules (which are just the set-theoretically surjective epimorphisms of Banach modules). This is the correct notion of projective object, since the category of Banach $A$-modules is an exact category, not an Abelian catefory. A module $P$ having property $(Pr)$ is equivalent to $P$ being a direct summand of an ONable module. (See the end of section 2 in \cite{Bu2}).
\subsection{Some notation and definitions}\label{notdef}
Let $p$ be a fixed prime. Let $D$ be a definite quaternion algebra over $\Q$ with discriminant $\delta$ prime to $p$. Fix a maximal order $\OO_D$ of $D$ and isomorphisms $\OO_D \otimes \Z_q \cong M_2(\Z_q)$ for primes $q \nmid \delta$. Note that these induce isomorphisms $D \otimes \Q_q \cong M_2(\Q_q) $ for $q \nmid \delta$. We define $D_f=D\otimes_\Q\A_f$, where $\A_f$ denotes the finite adeles over $\Q$. Write $Nm$ for the reduced norm map from $D_f$ to $\A_f^\times$. Note that if $g \in D_f$ we can regard the $p$ component of $g$, $g_p$, as an element of $M_2(\Q_p)$. 

For an integer $\alpha \geq 1$, we let $\mathbb{M}_\alpha$ denote the monoid of matrices $\begin{pmatrix}
a & b \\ c & d \\ \end{pmatrix} \in M_2(\Z_p)$ such that $p^\alpha | c$,  $p \nmid d$ and $ad-bc \neq 0$. If $U$ is an open compact subgroup of $D_f^\times$ and $\alpha \geq 1$ we say that $U$ has \emph{wild level} $\geq p^\alpha$ if the projection of $U$ to $\mathrm{GL}_2(\Q_p)$ is contained in $\mathbb{M}_\alpha$.

We will be interested in two key examples of open compact subgroups of $D_f^\times$. For $M$ any integer prime to $\delta$, we define $U_0(M)$ (respectively $U_1(M)$) to be the subgroup of $D_f^\times$ given by the product $\prod_q U_q$, where $U_q = (\OO_D\otimes\Z_q)^\times$ for primes $q | \delta$, and $U_q$ is the matrices in $\mathrm{GL}_2(\Z_p)$ of the form $\begin{pmatrix}
\ast & \ast \\ 0 & \ast
\end{pmatrix}$ (respectively $\begin{pmatrix}
\ast & \ast \\ 0 & 1
\end{pmatrix}$) mod $q^{\mathrm{val}_q(M)}$ for all other $q$. We can see that if $p^\alpha$ divides $M$, then $U_1(M)$ has wild level $\geq p^\alpha$.

Suppose we have $\alpha \geq 1$, $U$ a compact open subgroup of $D_f^\times$ of wild level $\geq p^\alpha$ and $A$ a module over a commutative ring $R$, with an $R$-linear right action of $\mathbb{M}_\alpha$. We define an $R$-module $\mathscr{L}(U,A)$ by
$$\mathscr{L}(U,A)=\{f:D_f^\times \rightarrow A : f(dgu)=f(g)u_p\ \hbox{ for all }d \in D^\times, g\in D_f^\times, u \in U\}$$ where $D^\times$ is embedded diagonally in $D_f^\times$. If we fix a set $\{d_i : 1\leq i \leq r\}$ of double coset representatives for the finite double quotient $D^\times \backslash D_f^\times / U$, and write $\Gamma_i$ for the finite group $d_i^{-1}D^\times d_i \cap U$, we have an isomorphism (see section 4 of \cite{Bu1}) $$\mathscr{L}(U,A)\rightarrow \bigoplus_{i=1}^r A^{\Gamma_i},$$ given by sending $f$ to $(f(d_1),f(d_2),\ldots,f(d_r))$. If $U \subset U_1(N)$ for $N \ge 4$, then the groups $\Gamma_i$ are trivial (this is proved in \cite{DT}).

For $f : D_f^\times \rightarrow A$, $x\in D_f^\times$ with $x_p \in \mathbb{M}_\alpha$, we define $f|x : D_f^\times \rightarrow A$ by $(f|x)(g)=f(gx^{-1})x_p$. Note that we can now also write $$\mathscr{L}(U,A)=\{f:D^\times \backslash D_f^\times \rightarrow A : f|u=f \hbox{ for all }u \in U\}.$$

We can define double coset operators on the spaces $\mathscr{L}(U,A)$. If $U$, $V$ are two compact open subgroups of $D_f^\times$ of wild level $\geq p^\alpha$, and $A$ is as above, then for $\eta\in D_f^\times$ with $\eta_p \in \mathbb{M}_\alpha$ we may define an $R$-module map $[U\eta V]:\mathscr{L}(U,A) \rightarrow \mathscr{L}(V,A)$ as follows: we decompose $U\eta V$ into a finite union of right cosets $\coprod_i U x_i$ and define $$f|[U\eta V] = \sum_i f|x_i.$$

\subsection{Overconvergent automorphic forms}\label{ocforms}

Let $\mathscr{W}$ be the rigid analytic space $\Hom(\Z_p^\times, \mathbb{G}_m)$, defined over $\Q_p$. The reader may consult lemma 2 of \cite{Bu1} for details of this space's construction and properties. For example, $\mathscr{W}$ is a union of finitely many open discs. The space $\mathscr{W}$ is the \emph{weight space} for our automorphic forms. The $\C_p$-points $w$ of $\mathscr{W}$ corresponding to characters $\kappa_w: \Z_p^\times \rightarrow \C_p^\times$ with $\kappa_w(x)=x^k \varepsilon_p(x)$ for some positive integer $k$ and  finite order character $\varepsilon_p$ are referred to as \emph{classical} weights. Let $X$ be a reduced connected $K$-affinoid subspace of $\mathscr{W}$, where $K/\Q_p$ is finite, and denote the ring of analytic functions on $X$ by $\OO(X)$. Such a space $X$ corresponds to a character $\kappa: \Z_p^\times \rightarrow \OO(X)^\times$ induced by the inclusion $X \subset \mathscr{W}$. If we have a real number $r=p^{-n}$ for some $n$, then we define $\mathbb{B}_{r,K}$ to be the rigid analytic subspace of affine $1$-space over $K$ with $\C_p$-points $$\mathbb{B}_{r,K}(\C_p)=\{z \in \C_p: \exists y\in \Z_p \hbox{ such that } |z-y|\leq r\}.$$ Similarly (for $r < 1$) we define $\mathbb{B}_{r,K}^\times$ to be the rigid analytic subspace of affine $1$-space over $K$ with $\C_p$-points $$\mathbb{B}_{r,K}^\times(\C_p)=\{z \in \C_p: \exists y\in \Z_p^\times \hbox{ such that } |z-y|\leq r\}.$$
A point $x \in X(\C_p)$ corresponds to a continuous character $\kappa_x :\Z_p^\times \rightarrow \C_p^\times$. Such maps are analytic when restricted to the set $\{z \in \Z_p : |1-z| \le r \}$ for small enough $r$. If $\kappa_x$ and $r$ have this property we call $x$ an $r$-analytic point. A point is $r$-analytic if and only if its corresponding character extends to a morphism of rigid analytic varieties $$\kappa_x:\mathbb{B}_{r,K}^\times \rightarrow \mathbb{G}_m.$$ 

Let $X$ be a $K$-affinoid subspace of $\mathscr{W}$ as before, with associated character $\kappa: \Z_p^\times \rightarrow \OO(X)^\times$. We say that $\kappa$ is $r$-analytic if every point in $X(\C_p)$ is $r$-analytic. Fix a real number $0 < r < 1$ and let $\mathscr{A}_{X,r}$ be the $\OO(X)$-Banach algebra $\OO(\mathbb{B}_{r,K} \times_K X)$, endowed with the supremum norm. If $\kappa$ is $rp^{-\alpha}$-analytic we can define a right action of $\mathbb{M}_\alpha$ on $\mathscr{A}_{X,r}$ by, for $f \in \mathscr{A}_{X,r}$, $\gamma = \begin{pmatrix}
a&b\\
c&d
\end{pmatrix} \in \mathbb{M}_\alpha ,$
$$(f\cdot \gamma)(x,z)=\frac{\kappa_x(cz+d)}{(cz+d)^2}f\left (x,\frac{az+b}{cz+d}\right ).$$ where $x \in X(\C_p)$ (with $\kappa_x$ the associated character) and $z \in \mathbb{B}_{r,K}(\C_p)$.

\begin{definition} \label{ocformdef}Let $X$ be a $K$-affinoid subspace of $\mathscr{W}$ as above, with $\kappa:\Z_p^\times \rightarrow \OO(X)^\times$ the induced character. If we have a real number $r=p^{-n}$, some integer $\alpha \ge 1$ such that $\kappa$ is $rp^{-\alpha}$-analytic, and $U$ a compact open subgroup of $D_f^\times$ of wild level $\geq p^{\alpha}$, then define the space of $r$-overconvergent automorphic forms of weight $X$ and level $U$ to be the $\OO(X)$-module
$$\mathbf{S}^D_{X}(U;r):=\mathscr{L}(U,\mathscr{A}_{X,r}).$$ 
\end{definition}

If we endow $\mathbf{S}^D_{X}(U;r)$ with the norm $|f|=\max_{g\in D_f^\times}|f(g)|$, then the isomorphism \begin{equation}\mathbf{S}^D_{X}(U;r)\cong \bigoplus_{i=1}^r \mathscr{A}_{X,r}^{\Gamma_i}\label{iso}\end{equation} induced by fixing double coset representatives $d_i$ is norm preserving. Since the $\Gamma_i$ are finite groups, and $\mathscr{A}_{X,r}$ is an ONable Banach $\OO(X)$-module (it is the base change to $\OO(X)$ of $\OO(\mathbb{B}_{r,K})$, and all Banach spaces over a discretely valued field are ONable), we see that $\mathbf{S}^D_{X}(U;r)$ is a Banach $\OO(X)$-module, and satisfies property $(Pr)$.

Note that if $\m$ is a maximal ideal of $\OO(X)$, corresponding to a point $x \in X(K')$ for $K'/K$ finite, then taking the fibre of the module $\mathbf{S}^D_{X}(U;r)$ at $\m$ gives the space of overconvergent forms $\mathbf{S}^D_{x}(U;r)$ corresponding to the point $x$ of $\mathscr{W}(K')$ (note that a point of $\mathscr{W}(K')$ is a reduced connected $K'$-affinoid subspace!).

These spaces of overconvergent automorphic forms were first defined in \cite{Bu1}, using ideas from the unpublished preprint \cite{St}.

\subsection{Dual modules}
Suppose $A$ is a Banach algebra. Given a Banach $A$-module $\mathbf{M}$ we define the \emph{dual} $\mathbf{M}^*$ to be the Banach $A$-module of continuous $A$-module morphisms from $\mathbf{M}$ to $A$, with the usual operator norm. We denote the $\OO(X)$-module $\mathscr{A}_{X,r}^*$ by $\mathscr{D}_{X,r}$.

If the map $\kappa$ corresponding to $X$ is $rp^{-\alpha}$-analytic, then $\mathbb{M}_\alpha$ acts continuously on $\mathscr{A}_{X,r}$, so $\mathscr{D}_{X,r}$ has an $\OO(X)$-linear right action of the monoid $\mathbb{M}_\alpha^{-1}$ given by $(f\cdot m^{-1})(x):=f(x\cdot m)$, for $f\in \mathscr{D}_{X,r}$, $x \in \mathscr{A}_{X,r}$ and $m \in \mathbb{M}_\alpha$. If $U$ is as in Definition~\ref{ocformdef} then its projection to $\mathrm{GL}_2(\mathbb{Q}_p)$ is contained in $\mathbb{M}_\alpha \cap \mathbb{M}_\alpha^{-1}$, so it acts on  $\mathscr{D}_{X,r}$. This allows us to make the following definition:

\begin{definition}
For $X$, $\kappa$, $r$, $\alpha$ and $U$ as above, we define the space of \emph{dual} \\$r$-overconvergent automorphic forms of weight $X$ and level $U$ to be the $\OO(X)$-module
$$\mathbf{V}^D_{X}(U;r):=\mathscr{L}(U,\mathscr{D}_{X,r}).$$
\end{definition}

As in \ref{ocforms}, we have a norm preserving isomorphism \begin{equation}\mathbf{V}^D_{X}(U;r)\cong \bigoplus_{i=1}^r \mathscr{D}_{X,r}^{\Gamma_i}.\label{dualiso}\end{equation} Thus $\mathbf{V}^D_{X}(U;r)$ is a Banach $\OO(X)$-module. We note that it will not usually satisfy property $(Pr)$, since (unless $X$ is a point)  we expect that $\mathscr{D}_{X,r}$ will not be ONable.

If $U$, $V$ are two compact open subgroups of $D_f^\times$ of wild level $\geq p^\alpha$, then for $\eta\in D_f^\times$ with $\eta_p \in \mathbb{M}_\alpha^{-1}$ we get double coset operators $[U\eta V]:\mathbf{V}^D_{X}(U;r) \rightarrow \mathbf{V}^D_{X}(V;r)$.

\subsection{Hecke operators}
For an integer $m$, we define the \emph{Hecke algebra away from} $m$, $\mathbb{T}^{(m)}$,  to be the free commutative $\OO(X)$-algebra generated by symbols $T_\pi, S_\pi$ for $\pi$ prime not dividing $m$. If $\delta p$ divides $m$ then we can define the usual action of $\mathbb{T}^{(m)}$ by double coset operators on $\OC$: for $\pi\nmid m$ define $\varpi_\pi\in\A_f$ to be the finite adele which is $\pi$ at $\pi$ and $1$ at the other places. Abusing notation slightly, we also write $\varpi_\pi$ for the element of $D_f^\times$ which is $\begin{pmatrix}
\pi & 0\\0&\pi
\end{pmatrix}$ at $\pi$ and the identity elsewhere. Similarly set $\eta_\pi=\begin{pmatrix}
\varpi_\pi & 0\\0&1
\end{pmatrix}$ to be the element of $D_f^\times$ which is $\begin{pmatrix}
\pi & 0\\0&1
\end{pmatrix}$ at $\pi$ and the identity elsewhere. On $\OC$ we let $T_\pi$ act by $[U\eta_\pi U]$ and $S_\pi$ by $[U\varpi_\pi U]$.
Similarly on $\OCd$ we define $T_\pi$ to act by $[U\eta_\pi^{-1}U]$ and $S_\pi$ by $[U\varpi_\pi^{-1} U]$. 
As usual we also have a compact operator acting on $\OC$, namely $U_p:=[U \eta_p U]$.
\subsection{A pairing}
In this section $X$, $\kappa$, $r$, $\alpha$ and $U$ will be as in Definition~\ref{ocformdef}. We will denote by $V$ another compact open subgroup of wild level $\geq p^\alpha$. We fix double coset representatives $\{d_i : 1\leq i \leq r\}$ for the double quotient $D^\times \backslash D_f^\times / U$ and let $\gamma_i$ denote the order of the finite group $d_i^{-1}D^\times d_i \cap U$. We can define an $\OO(X)$-bilinear pairing between the spaces $\OC$ and $\OCd$ by
$$\langle f,\lambda \rangle:=\sum_{i=1}^r \gamma_i^{-1}\langle f(d_i), \lambda(d_i)\rangle,$$
where $f \in \OC$, $\lambda \in \OCd$ and on the right hand side of the above definition $\langle\cdot , \cdot \rangle$ denotes the pairing between $\mathscr{A}_{X,r}$ and $\mathscr{D}_{X,r}$ given by evaluation.

This pairing is independent of the choice of the double coset representatives $d_i$, since for every $d \in D^\times$, $g \in D^\times_f$, $u \in U$, $f\in \OC$ and $\lambda \in \OCd$ we have $$\langle f(dgu),\lambda(dgu)\rangle=\langle f(g)u_p,\lambda(g)u_p \rangle=\langle f(g)u_p u_p^{-1},\lambda(g)\rangle=\langle f(g),\lambda(g)\rangle.$$ Combining this observation with the isomorphisms (\ref{iso}) and (\ref{dualiso}) we see that our pairing identifies $\OCd$ with $\OC^*$.

The following proposition summarises a standard computation \cite{Ta,DT} (although these assume the level group is small enough that the finite groups $\Gamma_i$ are trivial), telling us how our pairing interacts with double coset operators. In particular, it implies that $\langle T_\pi f, \lambda \rangle = \langle f, T_\pi \lambda \rangle$ for $\pi \nmid \delta p$ when $T_\pi$ acts in the usual way.
\begin{proposition}\label{prop:pairinghecke}
Let $f \in \OC$ and let $\lambda\in \mathbf{V}^D_X(V;r)$. Let $g \in D^\times_f$ with $g_p\in\mathbb{M}_\alpha$. Then $$\langle f|[U g V],\lambda\rangle=\langle f,\lambda|[Vg^{-1}U]\rangle.$$
\end{proposition}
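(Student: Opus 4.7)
The plan is to convert both sides of the identity into integrals against a Haar measure $d\mu$ on the unimodular group $D_f^\times$, so that the equality reduces to the measure-preserving change of variables $x\mapsto x^{-1}$. For any compact open $W \subset D_f^\times$ of wild level $\geq p^\alpha$ and any $f' \in \mathscr{L}(W, \mathscr{A}_{X,r})$, $\lambda' \in \mathscr{L}(W, \mathscr{D}_{X,r})$, the function $h\mapsto \langle f'(h), \lambda'(h)\rangle$ is right $W$-invariant, supported on finitely many classes in $D^\times\backslash D_f^\times/W$, and satisfies
$$\langle f', \lambda'\rangle = \mu(W)^{-1} \int_{D^\times \backslash D_f^\times} \langle f'(h), \lambda'(h)\rangle\, d\mu(h),$$
because the right $W$-orbit of $d_i$ in $D^\times \backslash D_f^\times$ has measure $\mu(W)/\gamma_i$, accounting exactly for the factor $\gamma_i^{-1}$ in the definition of the pairing.

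Similarly, the two double coset operators admit integral presentations
$$(f|[UgV])(h) = \mu(U)^{-1} \int_{UgV} f(hx^{-1})x_p\, d\mu(x), \qquad (\lambda|[Vg^{-1}U])(k) = \mu(V)^{-1} \int_{Vg^{-1}U} \lambda(ky^{-1})y_p\, d\mu(y),$$
which reduce to the defining finite sums once one partitions the domains into right $U$- and $V$-cosets and uses the invariance of $f$ and $\lambda$. Substituting the first expression into the LHS and applying the adjoint identity $\langle a\cdot x_p, \lambda\rangle = \langle a, \lambda\cdot x_p^{-1}\rangle$ that is built into the definition of the $\mathbb{M}_\alpha^{-1}$-action on $\mathscr{D}_{X,r}$, I would then change variables $k = hx^{-1}$ in the inner integral (preserving $d\mu(h)$ by right-invariance) and $y = x^{-1}$ in the outer integral (preserving Haar measure by unimodularity of $D_f^\times$, and mapping $UgV$ bijectively onto $Vg^{-1}U$). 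Swapping the order of integration then yields the integral form of the RHS.

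The main subtlety is that a decomposition $UgV = \coprod_i Ux_i$ does not directly produce right $V$-coset representatives for $Vg^{-1}U$, which is what one would need for a purely combinatorial version of the argument (as in \cite{Ta, DT}) once the stabilizers $\Gamma_i$ are nontrivial. Passing to the integral form makes the symmetric roles of $UgV$ and $Vg^{-1}U$ manifest through the measure-preserving involution $x\mapsto x^{-1}$, leaving the adjoint identity between the $\mathbb{M}_\alpha$- and $\mathbb{M}_\alpha^{-1}$-actions as the only genuinely non-formal input.
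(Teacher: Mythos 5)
Your proposal is correct and takes a genuinely different route from the paper. The paper's proof manipulates the defining finite sums directly: it expands $\langle f|[UgV],\lambda\rangle$ as a double sum over representatives of $D^\times\backslash D_f^\times/V$ and of $(g^{-1}Ug\cap V)\backslash V$, reparametrizes to a single sum over $D^\times\backslash D_f^\times/(g^{-1}Ug\cap V)$ ``by counting double cosets'' (which is exactly where the stabilizer cardinalities $\gamma(d)$ get absorbed), then re-expands symmetrically to land on $\langle f,\lambda|[Vg^{-1}U]\rangle$. Your version lifts the pairing and the double-coset operators to Haar integrals over $D^\times\backslash D_f^\times$ and over $UgV$, so that the adjointness becomes the measure-preserving change of variables $x\mapsto x^{-1}$ (using unimodularity of $D_f^\times$, which holds since $D^\times$ is an inner form of $\mathrm{GL}_2$) together with the right-invariance of the quotient measure. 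Both arguments hinge on the same non-formal input — the adjunction $\langle a\cdot m,b\rangle=\langle a,b\cdot m^{-1}\rangle$ built into the definition of the $\mathbb{M}_\alpha^{-1}$-action on $\mathscr{D}_{X,r}$ — but your formulation absorbs the nontrivial stabilizers $\Gamma_i$ into the normalization of the quotient measure, so there is no coset bookkeeping at all, and it makes the symmetry between $UgV$ and $Vg^{-1}U$ transparent. The only overhead is the (easy) verification that each Haar integral reduces to the defining finite sum because the integrand is locally constant and the effective domain is a finite union of cosets; since these are finite sums, the Fubini step is also automatic. Your remark about the subtlety of extracting $V$-coset representatives for $Vg^{-1}U$ from a $U$-coset decomposition of $UgV$ is a fair characterization of what the paper's combinatorial step handles implicitly and your integral step avoids.
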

\begin{proof}
For $d \in D^\times_f$ set $\gamma(d)=\#(d^{-1}D^\times d \cap V)$. We have $$f|[Ug V]=\sum_{v\in (g^{-1} U\eta)\cap V \backslash V}f|(g v),$$ hence \begin{eqnarray*}\langle f|[Ug V],\lambda\rangle&=&\sum_{d\in D^\times\backslash D^\times_f/V} \gamma(d)^{-1}\langle f|[Ug V](d),\lambda(d)\rangle\\
&=&\sum_{d\in D^\times\backslash D^\times_f/V}\sum_{v\in (g^{-1} Ug)\cap V \backslash V} \gamma(d)^{-1}\langle f|(g v)(d),\lambda(d)\rangle\\
&=&\sum_{d\in D^\times\backslash D^\times_f/V}\sum_{v\in (g^{-1} Ug)\cap V \backslash V} \gamma(d)^{-1}\langle f(dv^{-1}g^{-1})\cdot g_p v_p,\lambda(d)\rangle\\
&=&\sum_{x\in D^\times\backslash D^\times_f/(g^{-1} Ug)\cap V}\langle f(xg^{-1}),\lambda(x)\cdot g_p^{-1}\rangle\\
&=&\sum_{y\in D^\times\backslash D^\times_f/U\cap(g V g^{-1})}\langle f(y),\lambda(yg)\cdot g_p^{-1}\rangle\\
&=&\langle f,\lambda|[Vg^{-1}U]\rangle
\end{eqnarray*}where we pass from the third line to the fourth line by counting  double cosets and the final line follows by similar calculations to the first $5$ lines.
\end{proof}

By the results of section 5 in \cite{Chenun} and section 3 of \cite{Bu2} we know that for a fixed $d\geq 0$, if $X$ is a sufficiently small affinoid whose norm is multiplicative (with a precise bound given by Th\'eor\`eme 5.3.1 of \cite{Chenun}) then since $U_p$ acts as a compact operator on $\OC$, we have a $U_p$ stable decomposition $$\OC=\OC^{\leq d}\oplus \mathbf{N},$$ where $\OC^{\leq d}$ is the space of forms of slope $\leq d$. We need $X$ to be small enough that the Newton polygon of the characteristic power series for $U_p$ acting on $\OC$ has the same slope $\le d$ part when specialised to any point of $X$.

From now on we fix $d$ and assume that $X$ is such that this slope decomposition exists. The key example of such an $X$ is an open ball of small radius.

The space $\OC^{\leq d}$ is a finite Banach $\OO(X)$-module with property $(Pr)$, i.e. a projective finitely generated $\OO(X)$-module. In fact this decomposition must be stable under the action of $\mathbb{T}^{(\delta p)}$, since the $T_\pi$ and $S_\pi$ operators for $\pi \neq p$ commute with $U_p$. We define $\OCd^{\leq d}$ to be the maps from $\OC$ to $\OO(X)$ which are $0$ on $\mathbf{N}$. This space is also stable under the action of $\mathbb{T}^{(\delta p)}$ and is naturally isomorphic to the dual of $\OC^{\leq d}$. The following lemma implies that our pairing is perfect when restricted to $\OC^{\leq d}\times \OCd^{\leq d}$. 

\begin{lemma}
Let $\mathbf{M}$ be a finite Banach $\OO(X)$-module with property $(Pr)$. Then the usual natural map $\mathbf{M}\rightarrow (\mathbf{M}^{*})^{*}$ is an isomorphism. In other words, the $\OO(X)$-module $\mathbf{M}$ is \emph{reflexive}.
\end{lemma}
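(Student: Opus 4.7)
The plan is to exhibit $\mathbf{M}$ as a Banach-module direct summand of $\OO(X)^n$ for some finite $n$, after which reflexivity of $\mathbf{M}$ will follow formally from the (tautological) reflexivity of $\OO(X)^n$. This is the non-archimedean analogue of the classical argument that a finitely generated projective module is reflexive because a free module is. First I would combine finite generation with property $(Pr)$ to realise $\mathbf{M}$ as a split quotient of $\OO(X)^n$: pick $m_1,\dots,m_n$ generating $\mathbf{M}$ over $\OO(X)$ and form $\pi:\OO(X)^n \to \mathbf{M}$, $(a_i)\mapsto \sum_i a_i m_i$, where $\OO(X)^n$ carries the supremum norm. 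Then $\pi$ is continuous (its operator norm is bounded by $\max_i |m_i|$), $\OO(X)$-linear, and set-theoretically surjective, hence a strict epimorphism of Banach $\OO(X)$-modules; applying $(Pr)$ to the identity $\mathrm{id}_\mathbf{M}$ yields a continuous $\OO(X)$-linear section $\iota:\mathbf{M}\to\OO(X)^n$ with $\pi\circ\iota = \mathrm{id}_\mathbf{M}$.

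Next I would verify that $\OO(X)^n$ is reflexive and then diagram-chase. Evaluation at $1$ identifies $\OO(X)^*$ isometrically with $\OO(X)$, because any continuous $\OO(X)$-linear self-map is multiplication by its value at $1$; a direct computation then shows the canonical map $\eta_{\OO(X)}$ becomes the identity under this identification, and the same holds for $\OO(X)^n$ since dualisation commutes with finite direct sums of Banach modules. Double-dualising $\pi\circ\iota = \mathrm{id}_\mathbf{M}$ gives $\pi^{**}\circ\iota^{**} = \mathrm{id}_{\mathbf{M}^{**}}$, while naturality of $\eta$ produces
$$\iota^{**}\circ \eta_\mathbf{M} = \eta_{\OO(X)^n}\circ \iota, \qquad \eta_\mathbf{M}\circ \pi = \pi^{**}\circ \eta_{\OO(X)^n}.$$
The first relation, combined with the injectivity of $\iota$ and bijectivity of $\eta_{\OO(X)^n}$, forces $\eta_\mathbf{M}$ to be injective; the second, combined with the surjectivity of $\pi^{**}$ (it has right inverse $\iota^{**}$) and of $\eta_{\OO(X)^n}$, forces $\eta_\mathbf{M}$ to be surjective. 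The non-archimedean open mapping theorem then upgrades this continuous $\OO(X)$-linear bijection of Banach modules to a bicontinuous isomorphism.

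The only real obstacle is book-keeping: one must confirm that $\pi$ genuinely is a \emph{strict} epimorphism in the sense required for $(Pr)$, and that continuous dualisation interacts cleanly with finite direct sums of Banach modules (so that $(\OO(X)^n)^*$ really is $\OO(X)^n$, up to equivalent norm). Both are routine but unavoidable; everything else is formal category-theoretic diagram chasing valid in any additive category with duality, together with a single invocation of the open mapping theorem for Banach modules over the affinoid algebra $\OO(X)$.
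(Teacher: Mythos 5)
Your proof is correct and follows the same strategy the paper uses: finite generation together with property $(Pr)$ realizes $\mathbf{M}$ as a Banach direct summand of $\OO(X)^n$, and reflexivity of $\mathbf{M}$ then drops out of reflexivity of $\OO(X)^n$. The paper packages the second step as a diagram chase on doubly-dualised short exact sequences after invoking Buzzard's equivalence (Proposition 2.1 of \cite{Bu2}) between finite Banach $\OO(X)$-modules and finite $\OO(X)$-modules, while you argue directly from the retract identity $\pi\circ\iota=\mathrm{id}_\mathbf{M}$, naturality of the biduality map, and the open mapping theorem; both are sound ways of extracting the same conclusion.
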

\begin{proof}
Since $\mathbf{M}$ is finite we have a surjection of Banach $\OO(X)$-modules $\OO(X)^{\oplus n}\rightarrow \mathbf{M}$ for some $n$. Applying the universal property $(Pr)$ to this surjection shows that we have a Banach $\OO(X)$-isomorphism $\mathbf{M}\oplus \mathbf{N} \cong \OO(X)^{\oplus n}$ for some module $\mathbf{N}$, so $\mathbf{M}$ is a projective $\OO(X)$-module. Proposition 2.1 of \cite{Bu2} states that the category of finite Banach $\OO(X)$-modules, with continuous
$\OO(X)$-linear maps as morphisms, is equivalent to the category of
finite $\OO(X)$-modules so we can just compute duals module-theoretically. We have exact sequences
$$\xymatrix{0\ar[r]&\mathbf{M}\ar[r]&\OO(X)^{\oplus n}\ar[r]&\mathbf{N}\ar[r]&0} $$
$$\xymatrix{0\ar[r]&\mathbf{N}\ar[r]&\OO(X)^{\oplus n}\ar[r]&\mathbf{M}\ar[r]&0} $$

and since $\mathbf{M}$, $\mathbf{N}$, $\mathbf{M}^*$ and $\mathbf{N}^*$ are all projective as $\OO(X)$-modules we can take the dual of these exact sequences twice to get commutative diagrams with exact rows
$$\xymatrix{
0\ar[r]&\mathbf{M}\ar[r]\ar[d]&\OO(X)^{\oplus n}\ar[r]\ar[d]&\mathbf{N}\ar[r]\ar[d]&0\\
0\ar[r]&\mathbf{M}^{**}\ar[r]&\OO(X)^{\oplus n}\ar[r]&\mathbf{N}^{**}\ar[r]&0\\
}$$
$$\xymatrix{
0\ar[r]&\mathbf{N}\ar[r]\ar[d]&\OO(X)^{\oplus n}\ar[r]\ar[d]&\mathbf{M}\ar[r]\ar[d]&0\\
0\ar[r]&\mathbf{N}^{**}\ar[r]&\OO(X)^{\oplus n}\ar[r]&\mathbf{M}^{**}\ar[r]&0\\
}$$
where the vertical maps are the natural maps from a module to its double dual. Since the central maps are isomorphisms, we conclude that the outer maps are too.
\end{proof} 

\subsubsection{Direct limits and Fr\'echet spaces}
We should remark here that our use of the dual Banach modules $\OCd$ is slightly unsatisfactory. For example, the modules do not satisfy property $(Pr)$, and we must restrict to `slope $\leq d$' subspaces to get a perfect pairing. One could alternatively work with modules of \emph{all} overconvergent automorphic forms, rather than imposing $r$-overconvergence for a particular $r$. One defines
$$\OCdag :=  \varinjlim_r \OC,$$
where the (compact) transition maps in the direct system are induced by the inclusions $\mathbb{B}_{s,K} \subset \mathbb{B}_{r,K}$ for $s < r$. 
If $X$ is a point (so $\OO(X)$ is a field) then it is a standard result that the vector space $\OCdag$ is reflexive (see Proposition 16.10 of \cite{NFA}). Using this fact it is fairly straightforward to show that for any $X$, $\OCdag$ is a reflexive $\OO(X)$-module, with dual the Fr\'echet space
$$\OCddag := \varprojlim_r \OCd.$$
\subsection{Old and new}\label{oldnew}
Fix an integer $N\ge 1$ (the tame level) coprime to $p$ and fix an auxiliary prime $l \nmid Np\delta$. Let $X$ be an affinoid subspace of weight space with associated character $\kappa$ which is $rp^{-\alpha}$ analytic, for some integer $\alpha \ge 1$. Set $U=\UU$, $V=\VV$. To simplify notation we set \begin{eqnarray*}L&:=&\OC^{\leq d}\\L^*&:=&\OCd\s \\M&:=&\OCV\s\\M^*&:=&\OCdV\s.\end{eqnarray*} 

We define a map $i:L \times L \rightarrow M$ by
$$i(f,g):=f|[U 1 V]+g|[U \eta_l V].$$ Since the map $i$ is defined by double coset operators with trivial component at $p$ it commutes with $U_p$ and thus gives a well defined map between these spaces of bounded slope forms.
A simple calculation shows that these double coset operators act very simply. Regarding $f$ and $g$ as functions on $D^\times_f$ we have $f|[U 1 V]=f$, $g|[U\eta_l V]=g|\eta_l$. The image of $i$ inside $M$ will be referred to as the space of \emph{oldforms}.

We also define a map $i^\dagger:M \rightarrow L \times L$ by
$$i^\dagger (f):=(f|[V 1 U],f|[V \eta_l^{-1} U]).$$ The kernel of $i^\dagger$ is the space of \emph{newforms}. The maps $i$ and $i^\dagger$ commute with Hecke operators $T_q, S_q$, where $q \nmid Npl\delta$.

The same double coset operators give maps $$j:L^* \times L^* \rightarrow M^*,$$ $$j^\dagger: M^* \rightarrow L^* \times L^*.$$

Using Proposition \ref{prop:pairinghecke} we have 
$$\langle i(f,g),\lambda\rangle = \langle (f,g), j^\dagger \lambda \rangle$$
for $f,g\in L$, $\lambda \in M^*$. Similarly
$$\langle f,j(\lambda,\mu)\rangle = \langle i^\dagger f, (\lambda,\mu) \rangle$$ for $d \in M$, $\lambda,\mu \in L^*$.

An easy calculation shows that $i^\dagger i$ acts on the product $L\times L=L^2$ by the matrix (acting on the right) $$\begin{pmatrix}l+1 & [U\varpi_l^{-1}U][U\eta_l U] \\ [U\eta_l U] & l+1 \end{pmatrix}=\begin{pmatrix}l+1 & S_l^{-1}T_l \\ T_l & l+1 \end{pmatrix}.$$ We have exactly the same double coset operator formula for the action of $j^\dagger j$ on the product $L^*\times L^* = L^{*2}$. The Hecke operators $S_l, T_l$ act by $[U\varpi_l^{-1}U], [U\eta_l^{-1} U]$ respectively on $L^*$. Also, the double coset $U\eta_l U$ is the same as $U\varpi_l \eta_l^{-1}U$, since the matrix which is the identity at every factor except $l$ and $\begin{pmatrix}
0 & 1\\1&0
\end{pmatrix}$ at $l$ is in U. From these two facts we deduce that, in terms of Hecke operators, $j^\dagger j$ acts on $L^*\times L^*$ by the matrix (again acting on the right) $$\begin{pmatrix}l+1 & T_l \\ S_l^{-1}T_l & l+1 \end{pmatrix}.$$

If the affinoid $X$ is sufficiently nice, then we can show that the map $i^\dagger i$ is injective. Before we prove this, we note that in our setting a family of $p$-adic automorphic eigenforms over an affinoid $X \subset \mathscr{W}$ is just a Hecke eigenform $f$ in $\OC$.
\begin{proposition}\label{propinj} If $X$ is a one dimensional irreducible connected smooth affinoid, then the map $i^\dagger i$ is injective. \end{proposition}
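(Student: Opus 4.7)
The plan is to reduce injectivity of the $2\times 2$ matrix $i^\dagger i$ to that of a single Hecke operator on $L$, then use the smoothness of $X$ to reduce further to one fibre, and finally take that fibre to be at a classical weight.

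Since $T_l$, $S_l$ and $S_l^{-1}=[U\varpi_l^{-1}U]$ all lie in a commutative subalgebra of $\End_{\OO(X)}(L)$, a standard Cramer manipulation applied to the displayed matrix shows that any $(f,g)$ in the kernel of $i^\dagger i$ is annihilated componentwise by $\Delta := (l+1)^2 - S_l^{-1}T_l^2$. It therefore suffices to show that $\Delta$ acts injectively on $L$. The hypotheses on $X$ (one-dimensional, smooth, irreducible) make $\OO(X)$ a Dedekind domain, and by the results established earlier in this section the Banach module $L=\mathbf{S}^D_X(U;r)^{\leq d}$ is finitely generated projective of constant rank over $\OO(X)$, hence torsion-free. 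An $\OO(X)$-linear endomorphism of such a module is injective iff its determinant is a nonzero analytic function on $X$; to verify this it is enough to exhibit a single point $x_0 \in X$ at which the fibre $\Delta_{x_0}$ is invertible on $L_{x_0}$.

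For $x_0$ I would take a classical weight $\kappa_{x_0}(z)=z^k$ of positive integer weight $k$ large relative to the slope bound $d$; such weights are Zariski-dense in each open disc comprising $\mathscr{W}$ and hence accumulate in $X$, with archimedean $k$ unbounded. The quaternionic form of Coleman's classicality theorem then identifies $L_{x_0}$ with the classical slope-$\leq d$ subspace, a finite-dimensional space on which the Hecke algebra acts semisimply. Via Jacquet--Langlands each eigensystem corresponds to a classical modular eigenform of weight $k$ with $T_l$-eigenvalue $a_l$ and $S_l$-eigenvalue $s_l = l^{k-2}$ (up to the nebentypus at $l$), so $\Delta$ has eigenvalue $(l+1)^2 - a_l^2/s_l$ on it. The Ramanujan--Deligne bound $|a_l|_\infty \leq 2 l^{(k-1)/2}$ in every archimedean embedding gives $|a_l^2/s_l|_\infty \leq 4l < (l+1)^2$; since $a_l^2=(l+1)^2 s_l$ is an identity between algebraic numbers, it either holds in every embedding or in none, contradicting the archimedean bound. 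Hence every Hecke eigenvalue of $\Delta$ on $L_{x_0}$ is nonzero and $\Delta_{x_0}$ is invertible.

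The main technical hurdle is the specialisation step: verifying the applicability of the quaternionic classicality theorem in this slope-restricted family setting and pinning down the exact shape of $s_l$ in the paper's normalisations. After that the argument reduces to the short computation sketched above.
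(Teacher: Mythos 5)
Your first and last steps agree with the paper: you reduce $\ker(i^\dagger i)=0$ to injectivity of the single operator $T_l^2-(l+1)^2 S_l$ on $L$ by the same Cramer elimination, and you close by the Ramanujan--Petersson bound at a classical weight of large $k$. The gap is in the middle, where you need a classical weight of large $k$ \emph{inside $X$}. You write that classical weights ``are Zariski-dense in each open disc comprising $\mathscr{W}$ and hence accumulate in $X$,'' but Zariski-density of a countable set in a one-dimensional rigid space does not give accumulation in the $p$-adic metric, and in fact classical weights do \emph{not} accumulate at every point of $\mathscr{W}(\C_p)$. A classical weight $z\mapsto z^k\varepsilon(z)$ sends a topological generator $1+p$ of $1+p\Z_p$ to $\varepsilon(1+p)(1+p)^k\in\Q_p(\mu_{p^\infty})$, so the coordinates $\kappa(1+p)-1$ of all classical weights lie in the complete proper subfield $\widehat{\Q_p(\mu_{p^\infty})}$ of $\C_p$. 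A disc $X$ of small radius centred at a weight defined over a finite extension $K$ not contained in $\Q_p(\mu_{p^\infty})$ (for instance $K$ unramified, or $K=\Q_3(\sqrt{3})$) therefore contains no classical weight at all, and your choice of $x_0$ is impossible. Since the proposition is used for arbitrary small admissible $X$, this is a genuine failure, not merely a presentational gap.

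The paper sidesteps exactly this problem. Rather than looking for a classical weight in $X$, it assumes $\ker(i^\dagger i)\neq 0$, notes this gives a nonzero projective submodule of $L$ annihilated by $T_l^2-(l+1)^2 S_l$, and feeds this through Chenevier's local eigenvariety construction to produce a one-dimensional family of eigenforms, hence an irreducible component of the full tame-level-$N$ eigencurve for $D$ on which the analytic function $T_l^2-(l+1)^2 S_l$ vanishes identically. On the eigencurve (as opposed to on a small $X$) classical points \emph{are} Zariski-dense in every irreducible component; choosing one of sufficiently large weight there yields the Ramanujan contradiction. To repair your argument you would need either this passage to the global eigencurve, or some other mechanism for importing the archimedean bound that does not presuppose a classical weight in $X$ itself.
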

\begin{proof}
Let $L_0$ be the projective (since $\OO(X)$ is a Dedekind domain) finite Banach $\OO(X)$-module $\ker(i^\dagger i)$, and note that $L_0\subset L^2$ is stable under the action of all the Hecke operators, since they all commute with $i^\dagger i$. Suppose $L_0$ is not zero. For $(f,g)$ in $L_0$ we have $(l+1)f+S_l^{-1}T_l g=T_l f+(l+1)g=0$. Eliminating $g$ we get $T_l^2 f-(l+1)^2 S_l f=0$, so projecting $L_0$ down to $L$ (taking either the first or the second factor) we see that the Hecke operator $T_l^2-(l+1)^2 S_l$ acts as $0$ on a non-zero projective submodule of $L$. This (applying the local eigenvariety construction as described in section 6.2 of \cite{Chenun}) implies that there is a family of eigenforms over some one dimensional sub-affinoid of $X$, all with the eigenvalue of $T_l^2-(l+1)^2 S_l$ equal to $0$. Now the Hecke algebra element $T_l^2-(l+1)^2 S_l$ induces a rigid analytic function on the tame level $N$ eigencurve for $D$ (by taking the appropriate eigenvalue associated to a point), so this function must vanish on the whole irreducible component containing the one dimensional family constructed above.  However, every irreducible component contains a classical point, and these cannot be contained in the kernel of $T_l^2-(l+1)^2 S_l$ since this would contradict the Hecke eigenvalue bounds given by the Ramanujan-Petersson conjecture. \end{proof}

Note that the injectivity of $i^\dagger i$ implies the injectivity of $i$. The above shows that if $X$ is as in the statement of Proposition \ref{propinj}, we have $\ker(i^\dagger)\cap \mathrm{im}(i)=0$ so our families in $M$ are not both old and new at $l$. However, if $X$ is just a point, then $i^\dagger i$ may have a kernel - this corresponds to $p$-adic automorphic forms which are both old and new at $l$.

\subsection{Some modules}\label{somemodules}
We denote the fraction field of $\OO(X)$ by $F$. If $A$ is an $\OO(X)$-module we write $A_F$ for the $F$-vector space $A\otimes_{\OO(X)}F$.

We begin this section by noting that the injectivity of $i^\dagger i$ implies the injectivity of $j^\dagger j$:

Suppose $j^\dagger j (\lambda,\mu)=0$. Then $\langle (f,g),j^\dagger j (\lambda,\mu)\rangle=0$ for all $(f,g)\in L_F$, so (by Proposition \ref{prop:pairinghecke}) $\langle i^\dagger i(f,g),(\lambda,\mu)\rangle =0$ for all $(f,g)\in L_F$. Now since $i^\dagger i:L_F\rightarrow L_F$ is an injective endomorphism of a finite dimensional vector space, it is an isomorphism, so we see that $\lambda=\mu=0$. Hence $j^\dagger j$ (thus a fortiori $j$) is injective.

We now define two chains of modules which will prove useful:
$$\begin{array}{l l}\Lambda_0:=L^2 & \Lambda_0^*:=L^{*2}\\
\Lambda_1:=i^\dagger M & \Lambda_1^*:=j^\dagger M^*\\
\Lambda_2:=i^\dagger(M\cap i(L_F^2)) & \Lambda_2^*:=j^\dagger(M^*\cap j((L^*_F)^2))\\
\Lambda_3:=i^\dagger i L^2 & \Lambda_3^*:=j^\dagger j L^{*2}.\end{array}$$
We note that $\Lambda_0\supset \Lambda_1 \supset \Lambda_2 \supset \Lambda_3$, and that $$\Lambda_2/\Lambda_3=i^\dagger(M\cap i(L_F^2)/iL^2)=i^\dagger((M/iL^2)^{\mathrm{tors}}),$$ with analogous statements for the starred modules.

We fix the usual action of $\mathbb{T}^{(N\delta p l)}$ on all these modules. We can now describe some pairings between them which will be equivariant under the $\mathbb{T}^{(N\delta p l)}$ action. They will not all be equivariant with respect to the action of $T_l$.

We have a (perfect) pairing $\langle,\rangle:L_F^2\times (L^*_F)^2\rightarrow F$ which, since $j$ is injective, induces a pairing $$\Lambda_0 \times (M^*\cap j((L^*_F)^2)) \rightarrow F/\OO(X),$$ which in turn induces a pairing $$P_1: \Lambda_0/\Lambda_1 \times (M^*\cap j((L^*_F)^2)/j(L^{*2})) \rightarrow F/\OO(X).$$ The fact that this pairing is perfect follows from the following lemma:

\begin{lemma}
The pairing on $L_F^2\times (L^*_F)^2$ induces isomorphisms $$\Hom_{\OO(X)}(\Lambda_1,\OO(X))\cong M^*\cap j((L^*_F)^2)$$ and $$\Hom_{\OO(X)}(\Lambda_0,\OO(X))\cong j(L^{*2}).$$\end{lemma}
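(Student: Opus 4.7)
The plan is to use the perfect pairings between the slope $\le d$ spaces and their duals (supplied by the preceding reflexivity lemma), combined with the adjunction between $i^\dagger$ and $j$ from Proposition \ref{prop:pairinghecke}, to identify the two $\OO(X)$-duals with subspaces of $(L^*_F)^2$ and then transport them through the injection $j$.

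For the second isomorphism (the easier one): the preceding lemma applied to $L$ says that the pairing $L \times L^* \to \OO(X)$ is perfect, so $\Hom_{\OO(X)}(\Lambda_0, \OO(X)) = \Hom_{\OO(X)}(L^2, \OO(X)) \cong L^{*2}$ via the pairing on pairs. Since $j$ is injective (as remarked at the start of section \ref{somemodules}), $L^{*2} \cong j(L^{*2})$, giving the claim.

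For the first isomorphism, the key point is that $\Lambda_1 \otimes_{\OO(X)} F = L_F^2$. Indeed, by Proposition \ref{propinj} the composition $i^\dagger i$ is injective on $L^2$, hence its base change is an injective endomorphism of the finite-dimensional $F$-vector space $L_F^2$ and so is bijective; in particular $i^\dagger \otimes F \colon M_F \to L_F^2$ is surjective. Thus $\Lambda_1$ is a full-rank projective $\OO(X)$-submodule of $L^2$, and restriction along the inclusion $\Lambda_1 \hookrightarrow L^2$, together with the perfect pairing $L_F^2 \times (L^*_F)^2 \to F$, identifies $\Hom_{\OO(X)}(\Lambda_1, \OO(X))$ with
$$\{(\lambda,\mu) \in (L^*_F)^2 : \langle i^\dagger f, (\lambda,\mu) \rangle \in \OO(X) \text{ for every } f \in M\}.$$

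By Proposition \ref{prop:pairinghecke}, the defining condition rewrites as $\langle f, j(\lambda,\mu)\rangle \in \OO(X)$ for all $f \in M$. Since the pairing $M \times M^* \to \OO(X)$ is perfect (again by the preceding lemma), this is equivalent to $j(\lambda,\mu) \in M^*$, i.e.\ $j(\lambda,\mu) \in M^* \cap j((L^*_F)^2)$. Using the injectivity of $j$ one final time to replace $(\lambda,\mu)$ by $j(\lambda,\mu)$, we obtain the asserted isomorphism $\Hom_{\OO(X)}(\Lambda_1, \OO(X)) \cong M^* \cap j((L^*_F)^2)$. The main technical ingredient is the full-rank statement $\Lambda_1 \otimes F = L_F^2$, which rests entirely on the nontrivial Proposition \ref{propinj}; once that is in hand the argument is a formal manipulation of the adjunction.
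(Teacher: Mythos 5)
Your proof is correct and follows essentially the same path as the paper's: identify the $\OO(X)$-dual of $\Lambda_1$ with a lattice inside $(L^*_F)^2$ via the pairing, transfer the integrality condition through the adjunction $\langle i^\dagger m, l\rangle = \langle m, jl\rangle$, and invoke perfectness of the $M\times M^*$ pairing and injectivity of $j$. The one thing you do more carefully than the paper is to make explicit the full-rank statement $\Lambda_1\otimes_{\OO(X)}F = L_F^2$ (deduced from Proposition \ref{propinj}), which the paper uses tacitly when it asserts that elements of $\Hom_{\OO(X)}(\Lambda_1,\OO(X))$ correspond to elements of $(L^*_F)^2$; that step is indeed needed for both injectivity and surjectivity of the correspondence, so your added justification is apt.
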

\begin{proof}
For the first isomorphism, elements of the module $\Hom_{\OO(X)}(\Lambda_1,\OO(X))$ correspond to $l \in (L^*_F)^2$ such that $\langle i^\dagger m,l\rangle \in \OO(X)$ for all $m \in M$. We have $\langle i^\dagger m,l\rangle =\langle m, jl \rangle$ so $\langle i^\dagger m,l\rangle \in \OO(X)$ for all $m \in M$ if and only if $\langle m, jl \rangle \in \OO(X)$ for all $m \in M$, i.e. if and only if $jl \in M^*$. 

The second isomorphism is obvious, since $j$ is injective.
\end{proof}
In exactly the same way, we have a perfect pairing 
$$P_2: (M\cap i(L_F^2))/i(L^2) \times \Lambda_0^*/\Lambda_1^* \rightarrow F/\OO(X).$$

The final pairing we will need is induced by the pairing between $M$ and $M^*$. It is straightforward to check that this gives a perfect pairing: $$P_3: \mathrm{ker}(i^\dagger) \times M^*/(M^* \cap j((L_F^*)^2)) \rightarrow \OO(X).$$ 
\subsection{An analogue of Ihara's lemma}\label{Ihara}

In classical level raising results (such as \cite{DT,MR804706,Ta}) analogues of `Ihara's lemma' (Lemma 3.2 in \cite{Ih}) are used to show that prime ideals of a Hecke algebra containing the annihilators of certain modules of automorphic forms are in some sense `uninteresting', or even to show that these modules are trivial. In this section we prove the appropriate analogue of Ihara's lemma in our setting.

From this section onwards we will assume that $X$ is a one dimensional irreducible connected smooth affinoid in weight space $\mathscr{W}$, so we can apply Proposition \ref{propinj}. We want to obtain information about the $\mathbb{T}^{(N\delta p l)}$ action on the quotients $\Lambda_2/\Lambda_3\cong i^\dagger(M/iL^2)^{\mathrm{tors}}$, $\Lambda_2^*/\Lambda_3^*\cong j^\dagger(M^*/jL^{*2})^{\mathrm{tors}}$, $\Lambda_0/\Lambda_1$ and $\Lambda_0^*/\Lambda_1^*$. The pairings $P_1$ and $P_2$ allow us to use an analogue of Ihara's lemma (the following two propositions and theorem) to obtain crucial information about all four quotients. Recall that the radius of overconvergence $r$ equals $p^{-n}$ for some positive integer $n$. Fix a positive integer $c$ such that $Nm(U_1(Np^{\alpha+n}))$ contains all elements of $\hat \Z^{\times}$ congruent to $1$ modulo $c$. We first need a lemma allowing us to control certain forms with weight a point in weight space.

\begin{lemma}\label{normtrivial} Let $x \in \mathscr{W}(K')$ for some $K'$ a finite extension of $\Q_p$.
\begin{enumerate} \item Let $y\in \OCS$ be non-zero. Suppose $y$ factors through $Nm$, that is $y(g)=y(h)$ for all $g, h \in D_f^\times$ with $Nm(g)=Nm(h)$. Then $\kappa_x$ is a classical weight $z \mapsto z^2 \varepsilon_p(z)$, and for all but finitely many primes $q\equiv 1$ mod $c$, (where $c$ is the fixed integer chosen above), $(T_q-q-1)y=0$.
\item Let $y\in \OCdS$. If $y$ factors through $Nm$, that is $y(g)=y(h)$ for all $g, h \in D_f^\times$ with $Nm(g)=Nm(h)$, then $y$ is zero.
\end{enumerate}\end{lemma}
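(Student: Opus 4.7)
The plan is to exploit the level-$U$ transformation law $y(gu) = y(g)\cdot u_p$ together with the hypothesis that $y$ factors through $Nm$. The workhorse for both parts is the observation that for every $b \in \Z_p$, the element $u^{(b)} \in D_f^\times$ whose $p$-component is $\begin{pmatrix}1 & b \\ 0 & 1\end{pmatrix}$ and whose other local components are trivial lies in $U = U_1(Np^\alpha)$ and has $Nm(u^{(b)}) = 1$; the factoring-through-$Nm$ hypothesis then forces $y(gu^{(b)}) = y(g)$, which under the $\mathbb{M}_\alpha$-action becomes an analytic translation identity for $y(g)$.

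For part (1) this identity reads $y(g)(z+b) = y(g)(z)$ for every $b \in \Z_p$. A Taylor-coefficient comparison on each connected component of $\mathbb{B}_{r,K'}$ (using translation by $b$ with $|b|\leq r$, which ranges over the infinite set $p^n\Z_p$) forces $y(g)$ to be constant on each component, and translation by $b \in \Z_p\setminus p^n\Z_p$ then identifies the constant across the components, so $y(g)\in K'$. Choosing $g_0$ with $y(g_0)\neq 0$ and running the same argument with the lower-unipotent matrix $\begin{pmatrix}1 & 0 \\ p^\alpha & 1\end{pmatrix}$ at $p$ gives the identity $\kappa_x(p^\alpha z+1) = (p^\alpha z+1)^2$ on $z\in\mathbb{B}_{r,K'}$; analytic continuation on $\mathbb{B}_{rp^{-\alpha},K'}^\times$ extends this to $\kappa_x(w) = w^2$ on $1+p^{\alpha+n}\Z_p$, so $\varepsilon_p := \kappa_x/(\cdot)^2$ is a finite-order character. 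A parallel application with $u_p = \mathrm{diag}(x,x^{-1})$ for $x\in 1+p^\alpha\Z_p$ refines this to $\varepsilon_p(x)=1$ on $1+p^\alpha\Z_p$, so $\varepsilon_p(d_{u_p}) = 1$ for every $u \in U$. For the Hecke statement the standard computation gives $T_q y(g) = (q+1)\tilde y(Nm(g)\cdot q_{[q]}^{-1})$ where $\tilde y$ is the descended function on $\A_f^\times$; since $Nm(U) = \hat\Z^\times$ (surjectivity of the reduced norm at every local factor of $U$), the class of $q_{[q]}^{-1}$ modulo $\Q_{>0}$ is realised as $Nm(u)$ for some $u \in U$ with $u_p = \mathrm{diag}(q,1)$ (so $d_{u_p} = 1$), and the transformation law yields $\tilde y(s\cdot q_{[q]}^{-1}) = \tilde y(s)$; hence $(T_q-q-1)y = 0$ holds whenever the construction goes through, in particular for all but finitely many $q \equiv 1 \pmod c$.

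For part (2) the dual action formula $(\lambda\cdot u_p)(\phi) = \lambda(\phi\cdot u_p^{-1})$ turns the identity $y(g)\cdot u^{(b)}_p = y(g)$ into $y(g)(\phi(\cdot-b)) = y(g)(\phi)$ for every $\phi\in\mathscr{A}_{x,r}$ and every $b\in\Z_p$. Restricting to the component $\OO(\mathbb{B}_{r,K'}^{(0)})$ and testing against $\phi(z) = z^k$ with $b = p^n$, the expansion of $z^k-(z-b)^k$ in strictly lower powers of $z$ gives an induction on $k$ showing that $y(g)$ annihilates every polynomial in $z$. Density of polynomials in the Tate algebra $\OO(\mathbb{B}_{r,K'}^{(0)})$ then forces $y(g)$ to vanish on that component, and a translation transfer using $b\in\Z_p\setminus p^n\Z_p$ propagates the vanishing to the remaining components, giving $y(g) = 0$ for all $g$ and hence $y = 0$.

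The main technical obstacle lies in part (1): refining the conductor of $\varepsilon_p$ down to $p^\alpha$ (via the torus input) and then doing the adelic bookkeeping to certify that the specific $u$ used to realise $\tilde q$ can be chosen with $d_{u_p}=1$. The asymmetry asserted between (1) and (2) has a clean structural source: in (1) the translation identity only asks $y(g)$ to be constant (a one-dimensional constraint that admits nonzero solutions, but only for very special $\kappa_x$), whereas in (2) the same identity forces the functional $y(g)$ to annihilate a dense polynomial subspace (a strictly stronger constraint which leaves no room for a nonzero solution, regardless of $\kappa_x$).
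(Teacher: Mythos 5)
Your proof is correct and follows the paper's strategy in all essentials: upper-unipotent elements of $U \cap \mathrm{SL}_2(\Q_p)$ force $y(g)$ to be constant (part (1)) or identically zero as a functional (part (2)), the element $\begin{pmatrix}1&0\\p^\alpha&1\end{pmatrix}$ identifies $\kappa_x$, and the Hecke identity $T_q y = (q+1)\,y|\eta_q^{-1}$ is closed by realising the relevant idele class as the reduced norm of an element of the level group acting trivially on constants. Two micro-variations are worth noting. In part (1) you additionally invoke the torus element $\mathrm{diag}(x,x^{-1})$ with $x\in 1+p^\alpha\Z_p$ to see that $\varepsilon_p$ is trivial on all of $1+p^\alpha\Z_p$, letting you stay at level $U$ throughout; the paper only records triviality on $1+p^{\alpha+n}\Z_p$ and compensates by descending to $U_1(Np^{\alpha+n})$ (which is why the integer $c$ in the paper is fixed with respect to that smaller group rather than $U$). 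In fact your explicit choice $u_p=\mathrm{diag}(q,1)$, having $c=0$ and $d=1$, already acts trivially on constants regardless of the conductor of $\varepsilon_p$, so the torus refinement, while true, is not strictly needed. In part (2) you bypass the paper's concrete identification of $\mathscr{D}_{x,r}$ with a product of bounded power series rings, instead pairing $y(g)$ directly against monomials and choosing $b=p^n$ so that the translation stays inside a single component of $\mathbb{B}_{r,K'}$, with the remaining components handled by a translation transfer; this is a more component-aware packaging of the same binomial recursion that the paper extracts from the single matrix $\begin{pmatrix}1&-1\\0&1\end{pmatrix}$ (whose translation by $1$ in fact permutes the components of $\mathbb{B}_{r,K'}$, a point the paper elides and your version handles cleanly).
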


\begin{proof}
We first prove part (i). Suppose $y$ is as in the statement of that part. For $u_p \in \mathrm{SL}_2(\Q_p)\cap U$ we have $y(g)=y(gu_p)=y(g)\cdot u_p$ for all $g \in D^\times_f$. Noting that $\begin{pmatrix}
1 & a\\ 0 & 1
\end{pmatrix}\in \mathrm{SL}_2(\Q_p)\cap U$ for all $a \in \Z_p$, we see that $y(g)(z+a)=y(g)(z)$ for all $a \in \Z_p$, $z \in \mathbb{B}_{r,K'}$ so $y(g)(z)$ is constant in $z$, since non-constant rigid analytic functions have discrete zero sets. Recall that $U=\UU$, so $u_0:=\begin{pmatrix}
1 & 0\\ p^\alpha & 1
\end{pmatrix}$ is in $\mathrm{SL}_2(\Q_p)\cap U$, and for $z \in \mathbb{B}_{r,K'}$ we have $$y(g)(z)=(y(g)u_0)(z) = \frac{\kappa_x(p^\alpha z + 1)}{(p^\alpha z + 1)^2}y(g),$$ so $\kappa_x$ must correspond to the classical weight given by $z \mapsto z^2\epsilon_p(z)$ for some character $\epsilon_p$ trivial on $1+p^{\alpha + n} \Z_p$, where $r=p^{-n}$. This now implies that for each $g \in D^\times_f$ we have $y(g)\gamma = y(g)$ for all $\gamma$ in the projection of $U_1(Np^{\alpha+n})$ to $\mathbb{M}_{\alpha + n}$, since these matrices all have bottom right hand entry congruent to $1 \mod p^{\alpha+n}$. 

We now follow \cite{DT} to complete the proof of the first part of the lemma. There is a $d_0 \in D^{\times}$ with $Nm(d_0)=q$, so $Nm(d_0^{-1}\eta_q)\in \A_f^\times$ is actually in $\hat \Z^{\times}$ and is congruent to $1$ mod $c$. Thus (by the way we picked $c$) there is $u_0 \in U_1(Np^{\alpha+n})$ such that $Nm(u_0)=Nm(d_0^{-1}\eta_q)$. Now we have

\begin{align*}T_q(y)(g)&=\sum_{u\in(\eta_q^{-1} U\eta_q)\cap U\backslash U}y(gu^{-1} \eta_q^{-1})\cdot u_p\\
&=\sum_{u\in(\eta_q^{-1} U\eta_q)\cap U\backslash U}y(g\eta_q^{-1}u^{-1})\cdot u_p\\
&=\sum_{u\in(\eta_q^{-1} U\eta_q)\cap U\backslash U}y(g\eta_q^{-1})=(q+1)y(g\eta_q^{-1})\\
&=(q+1)y(g\eta_q^{-1}d_0 d_0^{-1})=(q+1)y(g u_0^{-1}d_0^{-1})=(q+1)y(g u_0^{-1})=(q+1)y(g),\end{align*}
where to pass from the first line to the second we use the fact that $y$ factors through $Nm$ to commute $y$'s arguments, from the second to the third we use that $y$ is modular of level $U$ and in the final line we first substitute $u_0$ for $d_0^{-1}\eta_q$ (since they have the same reduced norm), then commute $y$'s arguments and use the left invariance of $y$ under $D^\times$ followed by the fact that $u_0^{-1} \in U_1(Np^{\alpha+n})$ implies that $y(gu_0^{-1})=y(g)u_{0,p}^{-1}=y(g)$.

We now give a proof of the second part of the lemma. First we perform a formal calculation. Fix an isomorphism $$\mathscr{A}_{x,r}\cong\prod_{\alpha=1}^{n}K'\langle T \rangle$$ where $K'\langle T\rangle$ is the ring of power series with coefficients in $K'$ tending to zero ($T$ a formal variable), and $n$ is a positive integer depending on $r$. Such an isomorphism exists since $\mathbb{B}_{r,K'}$ is just a disjoint union of finitely many affinoid discs. We then have an identification of $\mathscr{D}_{x,r}$ with $\prod_{\alpha=1}^{n}K'\langle[T]\rangle$, where $K'\langle[T]\rangle$ denotes the ring of power series with bounded coefficients in $K'$, and the pairing between $\mathscr{A}_{x,r}$ and $\mathscr{D}_{x,r}$ is given on each component by $\langle \sum a_i T^i, \sum b_j T^j\rangle = \sum a_i b_i$. Now we can compute the action of $\gamma=\begin{pmatrix}
1 & -1\\0 & 1
\end{pmatrix}$ on $\mathscr{D}_{x,r}$. Let $f=(f_\alpha)_{\alpha= 1,...,n}$ be an element of $\mathscr{D}_{x,r}$, with $f_\alpha = \sum b_{j,\alpha} T^j.$ For each $\alpha=1,...,n$ and $i\geq 0$ fix $e_{i,\alpha}$ to be the element of $\mathscr{A}_{x,r}$ which is $T^i$ at the $\alpha$ component, and zero elsewhere. We have \begin{align*}\langle e_{i,\alpha},f\cdot\begin{pmatrix}
1 & -1\\0 & 1
\end{pmatrix}\rangle = \langle T^i, (\sum b_{j,\alpha} T^j)\cdot\begin{pmatrix}
1 & -1\\0 & 1
\end{pmatrix}\rangle &=\langle T^i\cdot\begin{pmatrix}
1 & 1\\0 & 1\end{pmatrix}, \sum b_{j,\alpha} T^j \rangle\\
&= \langle (T+1)^i, \sum b_{j,\alpha} T^j \rangle\\ &= \langle \sum_{k=0}^i \binom{i}{k}T^k,\sum b_{j,\alpha} T^j\rangle \\
&= \sum_{j=0}^i \binom{i}{j}b_{j,\alpha}.\end{align*} 
We can now see that if we have $f=f\cdot \begin{pmatrix}
1 & -1\\0 & 1\end{pmatrix}$ we get $\sum_{j=0}^i \binom{i}{j}b_{j,\alpha}=b_{i,\alpha}$ for all $i$ and $\alpha$, which implies that $f=0$. 

Now we return to the statement in the lemma and suppose $y\in \OCdS$ factors through $Nm$. Let $u_1$ be the element of $U \subset D_f^\times$ with $p$ component equal to $\gamma$ and all other components the identity. For all $g \in D_f^\times$, $y(gu_1)=y(g)\gamma$ since $u_1 \in U$ and $y(gu_1)=y(g)$ since $Nm(u_1)=1$. Hence $y(g)=y(g)\gamma$ and the above calculation shows that $y(g)=0$ for all $g$.

\end{proof}

The following two propositions apply the preceding lemma to give a form of Ihara's lemma for modules of overconvergent automorphic forms and dual overconvergent forms.
\begin{proposition}\label{preihara}
For all but finitely many primes $q\equiv 1$ mod $c$, $T_q-q-1$ annihilates the module $\mathrm{Tor}_1^{\OO(X)}(M/iL^2,\OO(X)/\m)$ for each maximal ideal $\m$ of $\OO(X)$. 
\end{proposition}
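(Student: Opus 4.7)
The plan is to rewrite the Tor module as a concrete kernel inside a fibre, then use the Ihara-style argument that such kernel elements must factor through the reduced norm, and finally apply the input Lemma \ref{normtrivial}(i).

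First I would use the hypothesis that $X$ is a smooth one-dimensional affinoid, so $\OO(X)$ is a Dedekind domain. Since $L$ and $M$ are finite Banach $\OO(X)$-modules with property $(Pr)$, they are projective hence flat, and by Proposition \ref{propinj} the map $i : L^2 \hookrightarrow M$ is injective. From the short exact sequence
$$0 \to L^2 \xrightarrow{i} M \to M/iL^2 \to 0$$
and the vanishing of $\mathrm{Tor}_1^{\OO(X)}(M,\OO(X)/\m)$, I obtain a Hecke-equivariant identification
$$\mathrm{Tor}_1^{\OO(X)}(M/iL^2,\OO(X)/\m) \;\cong\; \ker\bigl(i_x : L_x^2 \longrightarrow M_x\bigr),$$
where $x\in X(k(\m))$ is the point cut out by $\m$, and $L_x,M_x$ are the fibres (overconvergent forms at weight $x$). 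An element of the Tor is thus a pair $(\bar f,\bar g)\in L_x^2$ with $\bar f + \bar g\,|\,\eta_l = 0$ in $M_x$.

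The heart of the argument is to show that such a $\bar g$ (and hence $\bar f$) factors through the reduced norm $Nm$, so that Lemma \ref{normtrivial}(i) applies. From $\bar f = -\bar g\,|\,\eta_l$ and the $U$-invariance of $\bar f\in L_x$, the same computation with the associativity of the right action gives $\bar g\,|\,(\eta_l u \eta_l^{-1}) = \bar g$ for every $u\in U$. Thus $\bar g$ is right-invariant under the subgroup of $D_f^\times$ generated by $U$ and $\eta_l U\eta_l^{-1}$. At the $l$-component these are the two maximal compact subgroups $\mathrm{GL}_2(\Z_l)$ and $\eta_l \mathrm{GL}_2(\Z_l)\eta_l^{-1}$, stabilising adjacent vertices of the Bruhat--Tits tree, and the subgroup they generate is $\{g\in\mathrm{GL}_2(\Q_l):v_l(\det g)=0\}$, which contains $\mathrm{SL}_2(\Q_l)$. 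Away from $l$, $\bar g$ is still $U$-invariant. Since $D$ is definite but splits at $l$, the algebraic group $\mathrm{SL}_1(D)$ has non-compact group of $\Q_l$-points, so strong approximation applies: $D^{\times,1}$ is dense in $D_f^{\times,1}$ modulo $\mathrm{SL}_2(\Q_l)$. Combined with left $D^\times$-invariance and right $U$-invariance (whose residual $u_p$ twist is trivial on the approximating cosets since one may take approximators in the $p$-trivial part of $U$), this density implies that $\bar g$ is constant on fibres of $Nm$. The same then holds for $\bar f = -\bar g\,|\,\eta_l$, since $\bar g\,|\,\eta_l(x) = \bar g(x\eta_l^{-1})$ depends only on $Nm(x)\cdot l^{-1}$.

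Finally, applying Lemma \ref{normtrivial}(i) to both $\bar g$ and $\bar f$ yields $(T_q-q-1)\bar g = (T_q-q-1)\bar f = 0$ in $L_x$ for all but finitely many $q\equiv 1\pmod c$, and thus $(T_q-q-1)$ annihilates $(\bar f,\bar g)\in\ker(i_x)$, i.e.\ the Tor module. The main obstacle is the strong-approximation step: one must verify the \emph{pointwise} norm-factorisation hypothesis of Lemma \ref{normtrivial} rather than the weaker statement that $\bar g$ descends to a function of $Nm$ modulo $U$, and the bookkeeping of the $p$-component twist in the $U$-action has to be handled carefully. Everything else is a direct translation of the classical Ihara-lemma argument of Diamond--Taylor into the overconvergent/fibre setting.
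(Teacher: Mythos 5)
Your proposal is correct and follows essentially the same route as the paper: identify the Tor module with $\ker(i_x)$ on fibres, show that elements of this kernel are invariant under the group generated by $U$ and $\eta_l U\eta_l^{-1}$ (hence under $\mathrm{SL}_2(\Q_l)$), apply strong approximation for $D^{Nm=1}$ to obtain pointwise factorisation through $Nm$, and conclude via Lemma \ref{normtrivial}(i). The one place the paper is more precise than your sketch is the strong-approximation closure argument, where one fixes $g\in D_f^\times$ and shows the closed set $\{h\in D_f^{Nm=1}:y_2(gh)=y_2(g)\}$ contains the dense set $g^{-1}D^{Nm=1}(\Q)\mathrm{SL}_2(\Q_l)g$ (so the ``$p$-component twist'' you worry about never actually enters, since $\mathrm{SL}_2(\Q_l)$ embedded in $D_f^\times$ has trivial $p$-component), but you correctly flag this as the step requiring care.
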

\begin{proof}
Fix $q \equiv 1$ mod $c$ with $q \nmid Np\delta l$ and set $H_q:=T_q-q-1$. Let $\m$ be a maximal ideal of $\OO(X)$ and set $K' = \OO(X)/\m$. The maximal ideal $\m$ corresponds to a point $x$ of $X(K')$, with corresponding weight $\kappa_x:\Z_p^\times\rightarrow K'^\times$ the specialisation of $\kappa$ at $\m$.

We have a short exact sequence
$$\xymatrix{
0\ar[r]&L^2\ar[r]^i&M\ar[r]&M/iL^2\ar[r]&0
}.$$
Noting that $L^2$ and $M$ are $\OO(X)$-torsion free, hence flat, and taking derived functors of $-\otimes_{\OO(X)}K'$ gives an exact sequence
$$\xymatrix{
0\ar[r]&\mathrm{Tor}^{\OO(X)}_1(M/iL^2,K')\ar[r]^\delta&L^2\otimes_{\OO(X)}K'\ar@/^2pc/[d]^i\\
0&M/iL^2\otimes_{\OO(X)}K'\ar[l]&M\otimes_{\OO(X)}K'\ar[l]
}.$$

We have a commutative diagram 
$$\xymatrix{
0\ar[r]&L^2\ar[r]^i\ar[d]^{H_q}&M\ar[r]\ar[d]^{H_q}&M/iL^2\ar[r]\ar[d]^{H_q}&0\\
0\ar[r]&L^2\ar[r]^i&M\ar[r]&M/iL^2\ar[r]&0\\
},$$
so by the naturality of the long exact sequence for $\mathrm{Tor}$ the diagram
$$\xymatrix{
\mathrm{Tor}^{\OO(X)}_1(M/iL^2,K')\ar[r]^\delta\ar[d]^{H_q}&L^2\otimes_{\OO(X)}K'\ar[d]^{H_q}\\
\mathrm{Tor}^{\OO(X)}_1(M/iL^2,K')\ar[r]^\delta&L^2\otimes_{\OO(X)}K'
}$$ commutes. To complete the proof it suffices to prove that $H_q$ annihilates the kernel of $$i:L^2\otimes_{\OO(X)}K' \rightarrow M\otimes_{\OO(X)}K'.$$ We proceed by viewing these modules as spaces of automorphic forms with weight $x$ (a single point in weight space). We define two finite dimensional $K'$-vector spaces: \begin{eqnarray*}L_x&:=&\OCS^{\leq d} \\M_x&:=&\OCVS\s.\end{eqnarray*} There are maps $i_x:L_x^2 \rightarrow M_x$ and $i^\dagger_x:M_x \rightarrow L_x$ as defined in section \ref{oldnew} (taking the weight $X$ in that section to be the point $x$), but note that as now the weight is just a point in weight space, Proposition \ref{propinj} does not apply. In particular the map $i_x$ might not be injective.

Recall that $L$ and $M$ are finitely generated $\OO(X)$ modules, whence $L\otimes_{\OO(X)}K'$ and $M\otimes_{\OO(X)}K'$ are finite dimensional $K'$-vector spaces. Since the Newton polygon of the characteristic power series for $U_p$ acting on $\OC$ has the same slope $\le d$ part when specialised to any point of $X$, and the specialisation of the Banach $\OO(X)$-modules $\OC$ and $\OCV$ at $\m$ gives $\OCS$ and $\OCVS$ respectively, we have isomorphisms \\$L\otimes_{\OO(X)}K' \rightarrow L_x$ and $M\otimes_{\OO(X)}K' \rightarrow M_x$ which commute suitably with \\$i:L^2\otimes_{\OO(X)}K' \rightarrow M\otimes_{\OO(X)}K'$ and $i_x:L_x^2 \rightarrow M_x$. These isomorphisms also commute with double coset operators, so to prove the proposition it suffices show that $H_q$ annihilates the kernel of $i_x$. 

Suppose $i_x(y_1,y_2)=0$. Then $y_1=-y_2|\eta_l$, so we have $y_2 \in \OCS$, $y_2|\eta_l \in \OCS$. Therefore $y_2$ and $y_2|\eta_l$ are both invariant under the action of the group $U$, so $y_2$ is invariant under the action of the group generated by $U$ and $\eta_l U \eta_l^{-1}$ in $D_f^\times$. (Note that every element of this group has projection to its $p$th component lying in $\mathbb{M}_\alpha$.) Since by II.1.4, Corollary 1 of \cite{STrees} $$\mathrm{SL}_2(\Q_l)= \langle\mathrm{SL}_2(\Z_l),{\textstyle\begin{pmatrix}
l&0\\0&1
\end{pmatrix}}\mathrm{SL}_2(\Z_l){\textstyle\begin{pmatrix}
l&0\\0&1
\end{pmatrix}}^{-1}\rangle,
$$ and the $l$-factor of $U$ is $\mathrm{GL}_2(\Z_l)$, we have that $y_2$ is invariant under $\mathrm{SL}_2(\Q_l)$, where we embed $\mathrm{SL}_2(\Q_l)$ into $D_f^\times$ in the obvious way. 

Denote by $D^{Nm=1}$ the algebraic subgroup of $D^\times$ whose elements are of reduced norm $1$. We have $D^{Nm=1}(\Q_l) \cong \mathrm{SL}_2(\Q_l)$, since $D$ is split at $l$. The strong approximation theorem applied to $D^{Nm=1}$ implies that $D^{Nm=1}(\Q)\cdot\mathrm{SL}_2(\Q_l)$ is dense in $D^{Nm=1}_f:=D^{Nm=1}(\A_f)$, where $D^{Nm=1}(\Q)$ is embedded diagonally in $D^{Nm=1}_f$. For each $g\in D_f^\times$ we define

$$X^g:=\{h\in D_f^{Nm=1}: y_2(gh)=y_2(g)\}.$$

Since $y_2$ is continuous, $X^g$ is closed, and for $\delta \in D^{Nm=1}(\Q)$, $\gamma \in \mathrm{SL}_2(\Q_l)$ we have $y_2(gg^{-1}\delta\gamma g)=y_2(\delta\gamma g)=y_2(\gamma g)= y_2(g g^{-1}\gamma g) = y_2(g)$, since $g^{-1}\gamma g\in\mathrm{SL}_2(\Q_l)$.  Therefore $X^g$ contains the dense set $g^{-1}D^{Nm=1}(\Q)\mathrm{SL}_2(\Q_l)g$, so $X^g$ is the whole of $D^{Nm=1}_f$. This shows that $y_2$ factors through $Nm$. Now the first part of Lemma \ref{normtrivial} applies.
\end{proof}
\begin{proposition}\label{preiharadual}
The module $\mathrm{Tor}_1^{\OO(X)}(M^*/jL^{*2},\OO(X)/\m)$ is $0$ for all maximal ideals $\m$ of $\OO(X)$
\end{proposition}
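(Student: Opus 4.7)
The plan is to follow the framework of Proposition \ref{preihara} very closely, but exploit the fact that part (ii) of Lemma \ref{normtrivial} is sharper than part (i): it requires no assumption on the weight $\kappa_x$ and yields outright vanishing rather than annihilation by a Hecke operator. Consequently the whole Tor module vanishes, not merely an image of it in a cotangent space.

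First, I would apply the short exact sequence
\begin{equation*}
0 \to L^{*2} \xrightarrow{j} M^* \to M^*/jL^{*2} \to 0,
\end{equation*}
in which the injectivity of $j$ was established at the beginning of section \ref{somemodules}. By the reflexivity lemma of section 2.6, the modules $L$ and $M$ are finite projective over $\OO(X)$; since taking the $\OO(X)$-linear dual of a finite projective module preserves finite projectivity, $L^*$ and $M^*$ are also finite projective and in particular flat. Tensoring the displayed sequence with $K' := \OO(X)/\m$ then gives
\begin{equation*}
0 \to \mathrm{Tor}_1^{\OO(X)}(M^*/jL^{*2}, K') \to L^{*2} \otimes_{\OO(X)} K' \xrightarrow{j \otimes K'} M^* \otimes_{\OO(X)} K',
\end{equation*}
and the proposition is reduced to proving that $j \otimes K'$ is injective.

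Next, I would identify the base change $L^* \otimes_{\OO(X)} K'$ with the fibre at $x$. For a finite projective $\OO(X)$-module $P$, the natural map $\Hom_{\OO(X)}(P, \OO(X)) \otimes_{\OO(X)} K' \to \Hom_{K'}(P \otimes_{\OO(X)} K', K')$ is an isomorphism; combining this with $L \otimes_{\OO(X)} K' \cong L_x$ (already used in the proof of Proposition \ref{preihara}) and the analogous statement for $M$, and observing that these identifications are compatible with the double coset operators defining $j$, we reduce to proving that the map $j_x : (L_x^*)^2 \to M_x^*$, defined as in section \ref{oldnew} but with the weight equal to the single point $x$, is injective.

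Finally, suppose $j_x(\lambda_1, \lambda_2) = 0$. Then $\lambda_1 = -\lambda_2 | \eta_l$ as functions on $D_f^\times$, and I would repeat verbatim the strong-approximation argument from the proof of Proposition \ref{preihara}: both $\lambda_2$ and $\lambda_2|\eta_l$ being $U$-invariant forces $\lambda_2$ to be invariant under the subgroup generated by $U$ and $\eta_l U \eta_l^{-1}$, whose $l$-component is all of $\mathrm{SL}_2(\Q_l)$ by \cite[II.1.4, Cor. 1]{STrees}; strong approximation for $D^{Nm=1}$ (which is applicable since $D$ splits at $l$) then shows that $\lambda_2$ factors through the reduced norm. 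At this stage Lemma \ref{normtrivial}(ii) applies and gives $\lambda_2 = 0$ with no restriction on $\kappa_x$, whence $\lambda_1 = 0$ as well. There is no genuine obstacle here: the only delicate point is the base-change/duality bookkeeping in the second step, and the real content, encoding the asymmetry between $\OC$ and $\OCd$ emphasised in the introduction, was already carried out in the proof of Lemma \ref{normtrivial}(ii).
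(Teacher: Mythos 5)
Your proposal is correct and follows essentially the same route as the paper: reduce to injectivity of $j\otimes_{\OO(X)}\OO(X)/\m$ via flatness of $L^{*2}$ and $M^*$, identify that map with $j_x:(L_x^*)^2\to M_x^*$ on the fibre at $x$, run the strong-approximation argument from Proposition~\ref{preihara} to show a kernel element must factor through $Nm$, and invoke Lemma~\ref{normtrivial}(ii). The paper's own proof is terse (``proceeding as at the beginning of the proof of Proposition~\ref{preihara}\dots and finally apply the second part of Lemma~\ref{normtrivial}''); you have merely filled in the flatness and base-change details it leaves implicit, correctly.
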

\begin{proof}
We again set $K'=\OO(X)/\m$, and let $x \in X(K')$ be the point corresponding to $\m$. Proceeding as at the beginning of the proof of Proposition \ref{preihara} we see that we must show that the map $$j:L^{*2}\otimes_{\OO(X)}K' \rightarrow M^*\otimes_{\OO(X)}K'$$ is injective. 
We define \begin{eqnarray*}L^{*}_x&:=&\OCdS^{\leq d} \\M^*_x&:=&\OCVSd\s.\end{eqnarray*}
As in the previous proposition, it is sufficient to show that the map $j_x:L_x^{*2} \rightarrow M^*_x$ is injective. Now we continue as in the proof of Proposition \ref{preihara}, and finally apply the second part of Lemma \ref{normtrivial}.
\end{proof}

The following consequence of the preceding two propositions will be the most convenient analogue of Ihara's lemma for our applications.
\begin{theorem}\label{lemma:trivialtorsion}
\begin{enumerate} \item There is a positive integer $e$ such that for all but finitely many primes $q\equiv 1 \mod c$, $(T_q-q-1)^{e}$ annihilates $(M/iL^2)^{\mathrm{tors}}$.  Therefore these Hecke operators annihilate the modules $\Lambda_2/\Lambda_3$ and, by consideration of the pairing $P_2$, $\Lambda_0^*/\Lambda_1^*$.
\item The module $(M^*/jL^{*2})^{\mathrm{tors}}$ is equal to $0$. Therefore the modules $\Lambda_2^*/\Lambda_3^*$ and, by consideration of the pairing $P_1$, $\Lambda_0/\Lambda_1$ are also equal to $0$.\end{enumerate}
\end{theorem}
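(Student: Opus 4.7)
The plan is to reduce both parts of the theorem to the Tor-vanishing statements of Propositions~\ref{preihara} and~\ref{preiharadual}, exploiting the structure theorem for finitely generated modules over the Dedekind domain $\OO(X)$. For part (1), write $C := M/iL^2$ and $N := C^{\mathrm{tors}}$; since $L$ and $M$ are finitely generated over $\OO(X)$, so is $C$, and hence $N \cong \bigoplus_j \OO(X)/\m_j^{n_j}$ is a finite direct sum of cyclic primary modules. Set $e := \max_j n_j$. Because $T_q - q - 1$ is $\OO(X)$-linear, it respects the decomposition of $N$ into $\m$-primary components $N_\m$, and it suffices to show $(T_q - q - 1)^e$ kills each $N_\m$.

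The first step is to bootstrap Proposition~\ref{preihara} from $\OO(X)/\m$ to $\OO(X)/\m^n$. From the short exact sequence $0 \to \m^{n-1}/\m^n \to \OO(X)/\m^n \to \OO(X)/\m^{n-1} \to 0$ (noting $\m^{n-1}/\m^n \cong \OO(X)/\m$ by invertibility of $\m^{n-1}$) the long exact sequence of $\mathrm{Tor}_\ast(C, -)$ furnishes a three-term Hecke-equivariant exact sequence with outer terms $\mathrm{Tor}_1(C, \OO(X)/\m)$ and $\mathrm{Tor}_1(C, \OO(X)/\m^{n-1})$. The elementary observation that if a commuting endomorphism $T$ kills the outer terms of a three-term exact sequence to orders $a$ and $d$ then it kills the middle term to order $a+d$ inductively yields that $(T_q - q - 1)^n$ annihilates $\mathrm{Tor}_1(C, \OO(X)/\m^n)$ for all but finitely many primes $q \equiv 1 \pmod{c}$.

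The second step is to identify $\mathrm{Tor}_1(C, \OO(X)/\m^n)$ with $N_\m$ for $n \geq e$. Since $\mathrm{Tor}$ commutes with localization, we may compute over the DVR $\OO(X)_\m$ using the resolution $0 \to \OO(X)_\m \xrightarrow{\pi^n} \OO(X)_\m \to \OO(X)_\m/\pi^n \to 0$ (for a uniformizer $\pi$), obtaining a canonical, $T_q$-equivariant isomorphism $\mathrm{Tor}_1(C, \OO(X)/\m^n) \cong C_\m[\pi^n]$. For $n \geq e$ this $\pi^n$-torsion exhausts $N_\m$, so combining with the first step we conclude that $(T_q - q - 1)^e$ annihilates every $N_\m$ and hence $N$. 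The conclusion for $\Lambda_2/\Lambda_3 = i^\dagger N$ follows from the Hecke-equivariance of $i^\dagger$, and that for $\Lambda_0^*/\Lambda_1^*$ follows by dualizing through the perfect Hecke-equivariant pairing $P_2$ (cf.\ Proposition~\ref{prop:pairinghecke}).

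Part (2) is cleaner: Proposition~\ref{preiharadual} gives $\mathrm{Tor}_1(M^*/jL^{*2}, \OO(X)/\m) = 0$ for every maximal ideal $\m$. Since $L^*$ and $M^*$ are finitely generated projective $\OO(X)$-modules (as duals of finitely generated projectives), locally at $\m$ they are free over the DVR $\OO(X)_\m$; applying Smith normal form to the injection $j_\m : L^{*2}_\m \to M^*_\m$ (injectivity of $j$ having been established at the start of section~\ref{somemodules}), the vanishing $\ker(j_\m \otimes \OO(X)/\m) = 0$ forces every elementary divisor of $j_\m$ to be a unit, whence $M^*_\m/j(L^{*2}_\m)$ is $\OO(X)_\m$-free. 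Hence $(M^*/jL^{*2})^{\mathrm{tors}} = 0$, and the vanishings of $\Lambda_2^*/\Lambda_3^* = j^\dagger((M^*/jL^{*2})^{\mathrm{tors}})$ and (via the perfect pairing $P_1$) $\Lambda_0/\Lambda_1$ follow at once. The main technical point is the bootstrapping in part (1): one must carefully verify that the identification of the Tor at $\OO(X)/\m^n$ with the $\pi^n$-torsion of $C_\m$ is genuinely $T_q$-equivariant, which reduces to the functoriality of $\mathrm{Tor}$ in its first argument.
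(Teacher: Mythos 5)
Your argument is correct and follows essentially the same route as the paper: decompose $(M/iL^2)^{\mathrm{tors}}$ into $\m$-primary cyclic pieces via the structure theorem over the Dedekind domain $\OO(X)$, set $e$ to be the maximal exponent, and bootstrap Propositions~\ref{preihara} and~\ref{preiharadual} from $\OO(X)/\m$ to $\OO(X)/\m^n$ via the long exact sequence of $\mathrm{Tor}$ together with Hecke-equivariance and the pairings $P_1$, $P_2$. The only cosmetic difference is that you run the induction with the sequence $0 \to \m^{n-1}/\m^n \to \OO(X)/\m^n \to \OO(X)/\m^{n-1}\to 0$ and identify $\mathrm{Tor}_1(C,\OO(X)/\m^n)$ with $C_\m[\pi^n]$ by localizing, where the paper uses $0 \to \m/\m^{e_i} \to \OO(X)/\m^{e_i} \to \OO(X)/\m \to 0$ and identifies the torsion via $\mathrm{Tor}_1(C,\OO(X)/(\alpha))$; and you spell out the part (2) argument (Smith normal form over the DVR $\OO(X)_\m$) where the paper says it ``follows easily'' — both are the same standard flatness criterion.
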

\begin{proof}
We first prove the first part of the theorem. Fix a $q \equiv 1 \mod c$ with $q \nmid Np\delta l$. The module $(M/iL^2)^{\mathrm{tors}}$ is finitely generated (and torsion) over the Dedekind domain $\OO(X)$, so it is isomorphic as an $\OO(X)$-module to $\bigoplus_i \OO(X)/\m_i^{e_i}$ for some finite set of maximal ideals $\m_i$ in $\OO(X)$. We set $e$ to be the maximum of the $e_i$. Set $H_q:=T_q-q-1$ as before. We will show that $H_q^{e}$ annihilates $(M/iL^2)^{\mathrm{tors}}$.

Indeed, suppose that $m \in M$ represents a nonzero torsion class in $M/iL^2$. Thus, there exists a nonzero $\alpha \in \OO(X)$ such that $\alpha m \in iL^2$. We have $$\mathrm{Tor}_1^{\OO(X)}(M/iL^2,\OO(X)/(\alpha))=\{m \in M/iL^2 : \alpha m = 0\}.$$ So we are required to prove that $H_q^e$ annihilates $$\mathrm{Tor}_1^{\OO(X)}(M/iL^2,\OO(X)/(\alpha)).$$
Since $$(M/iL^2)^{\mathrm{tors}}\cong \bigoplus_i \OO(X)/\m_i^{e_i},$$ we can assume that $(\alpha)\supset \prod_i \m_i^{e_i}$, so it is enough to show that $H_q^e$ annihilates $$\bigoplus_i \mathrm{Tor}_1^{\OO(X)}(M/iL^2,\OO(X)/\m_i^{e_i}).$$
 
Taking derived functors of $-\otimes_{\OO(X)} M/iL^2$ of the short exact sequence
$$\xymatrix{
0\ar[r]&\m_i/\m_i^{e_i}\ar[r]&\OO(X)/\m_i^{e_i}\ar[r]&\OO(X)/\m_i\ar[r]&0
}$$
and applying Proposition \ref{preihara} and induction on $e_i$ (note that $\m_i/\m_i^{e_i}$ is isomorphic to $\OO(X)/\m_i^{e_i-1}$), we see that for each $i$, $H_q^{e_i}$ annihilates $\mathrm{Tor}_1^{\OO(X)}(M/iL^2,\OO(X)/\m_i^{e_i})$. Now $e_i \le e$ for all $i$, so $H_q^e$ annihilates all of $\mathrm{Tor}_1^{\OO(X)}(M/iL^2,\OO(X)/(\alpha))$. Since $\alpha$ was arbitrary, $H_q^e$ annihilates $(M/iL^2)^{\mathrm{tors}}$.

The second part of the theorem follows easily from Proposition \ref{preiharadual}.
\end{proof}

\subsection{Supporting Hecke ideals}
We will call a maximal ideal $\mathfrak{M}$ in the Hecke algebra $\mathbb{T}^{(N \delta p)}$ \emph{Eisenstein} (compare \cite{M}) if there is some positive integer $c$ such that for all but finitely many primes $q\equiv 1 \mod c$, $T_q-q-1\in \mathfrak{M}$. The motivation for this definition is the following well-known lemma:

\begin{lemma}Suppose we have a Galois representation $\rho: \mathrm{Gal}(\overline{\Q}/\Q)\rightarrow \mathrm{GL}_2(\overline{\Q}_p)$, continuous and unramified at all but finitely many primes, and a positive integer $c$ such that for all but finitely many primes $q \equiv 1 \mod c$, $\rho$ is unramified at $q$ and the trace of Frobenius at $q$, $\mathrm{Tr}(\mathrm{Frob}_q)=q+1$. Then $\rho$ is reducible.
\end{lemma}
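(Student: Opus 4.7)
The plan is to compare the trace of $\rho$ with that of the reducible representation $\mathbf{1} \oplus \chi_{\mathrm{cyc}}$, where $\chi_{\mathrm{cyc}}$ denotes the $p$-adic cyclotomic character, after restricting to a suitable open normal subgroup, and then to invoke the Brauer--Nesbitt theorem and Clifford's theorem.

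First I would set $H := \Gal(\overline{\Q}/\Q(\zeta_c))$, an open normal subgroup of finite index in $\Gal(\overline{\Q}/\Q)$. A prime $q \nmid c$ satisfies $q \equiv 1 \pmod{c}$ precisely when its (unramified) Frobenius conjugacy class lies in $H$; by the Chebotarev density theorem, the set of such $\mathrm{Frob}_q$, excluding the finitely many primes where $\rho$ or $\chi_{\mathrm{cyc}}$ ramifies, is dense in $H$. Using $\chi_{\mathrm{cyc}}(\mathrm{Frob}_q) = q$, the hypothesis rewrites as
$$\mathrm{Tr}(\rho(\mathrm{Frob}_q)) = 1 + \chi_{\mathrm{cyc}}(\mathrm{Frob}_q).$$
Because the image of $\rho$ is compact it lands in $\mathrm{GL}_2(E)$ for some finite extension $E/\Q_p$, so the trace is continuous, and density together with continuity gives $\mathrm{Tr}(\rho|_H) \equiv 1 + \chi_{\mathrm{cyc}}|_H$ on all of $H$. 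Brauer--Nesbitt then identifies the semisimplifications
$$\rho|_H^{\mathrm{ss}} \cong \mathbf{1} \oplus \chi_{\mathrm{cyc}}|_H.$$

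To deduce reducibility of $\rho$ itself I would argue by contradiction: if $\rho$ were irreducible then, being two-dimensional, it is semisimple, and Clifford's theorem applied to the normal finite-index subgroup $H$ forces $\Gal(\overline{\Q}/\Q)/H$ to act transitively on the irreducible constituents of $\rho|_H$. However, both $\mathbf{1}$ and $\chi_{\mathrm{cyc}}|_H$ are invariant under the conjugation action on characters of $H$, since $\chi_{\mathrm{cyc}}$ extends to a character of the whole group $\Gal(\overline{\Q}/\Q)$. Transitivity would then force $\chi_{\mathrm{cyc}}|_H = \mathbf{1}$, equivalently $\Q(\zeta_{p^\infty}) \subset \Q(\zeta_c)$, which is impossible as $\Q(\zeta_{p^\infty})/\Q$ has infinite degree.

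The only point requiring any care is that trace matching on a dense set of Frobenii really pins down the semisimplification of $\rho|_H$, but this follows at once from continuity combined with Brauer--Nesbitt. Everything else is formal, and I do not anticipate any serious obstacle; the argument is the standard proof of this well-known lemma.
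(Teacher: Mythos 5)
Your argument is correct, and the first half (Chebotarev density plus Brauer--Nesbitt to get $\rho|_H^{\mathrm{ss}} \cong \mathbf{1} \oplus \chi|_H$ with $H = \Gal(\overline{\Q}/\Q(\zeta_c))$) is exactly what the paper does. But the second half is genuinely different: where you invoke Clifford's theorem for the open normal subgroup $H$ and derive a contradiction from the failure of $G/H$-transitivity on the two $G$-invariant constituents $\mathbf{1}$ and $\chi|_H$ (which would force $\chi|_H = \mathbf{1}$, absurd since $\chi$ has infinite image), the paper instead passes directly to a positive conclusion via Frobenius reciprocity. It observes that $\rho|_H$, being two-dimensional, contains a one-dimensional $H$-subrepresentation isomorphic to $\mathbf{1}$ or $\chi|_H$; Frobenius reciprocity then gives a nonzero $G$-map $\mathrm{Ind}_H^G(\mathbf{1}\oplus\chi|_H) \to \rho$, and since $G/H$ is abelian the induced representation decomposes as a direct sum of characters, so $\rho$ acquires a one-dimensional subrepresentation. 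The two routes cost about the same; yours avoids induction of representations but needs the (standard, though worth flagging for a profinite $G$ with $H$ open normal) version of Clifford's theorem, while the paper's keeps the argument constructive and avoids arguing by contradiction.
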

\begin{proof} The formula for the traces implies that $\rho$ restricted to the absolute Galois group of the cyclotomic field $\Q(\zeta_c)$ has semisimplification isomorphic to $\mathbf{1}\oplus \chi$, where $\mathbf{1}$ is the trivial one dimensional representation, and $\chi$ is the $p$-adic cyclotomic character. Denoting $\mathrm{Gal}(\overline{\Q}/\Q)$ by $G$ and $\mathrm{Gal}(\overline{\Q}/\Q(\zeta_c))$ by $H$  and applying Frobenius reciprocity we conclude that $$\mathrm{Hom}(\mathrm{Ind}_H^G(\mathbf{1}\oplus \chi),\rho)$$ is non-zero. $\mathrm{Ind}_H^G(\mathbf{1}\oplus \chi)$ is just a direct sum of one dimensional representations, so $\rho$ is reducible.
\end{proof}
We recall that for an arbitrary (commutative) ring $R$ the \emph{support} of an $R$-module $A$ is the set of prime ideals $\mathfrak{p}\lhd R$ such that the localisation $A_\mathfrak{p}$ is non-zero. If $A$ is finitely generated as an $R$-module then the support of $A$ is equal to the set of prime ideals in $R$ containing the annihilator of $A$. We write $\mathbb{T}_L$ for the image of $\mathbb{T}^{(N \delta p)}$ in $\mathrm{End}_{\OO(X)}(L)$ and similarly $\mathbb{T}_M$ for the image of $\mathbb{T}^{(N \delta p l)}$ in $\mathrm{End}_{\OO(X)}(M)$. Analogously we define $\mathbb{T}_{L^*}$ and $\mathbb{T}_{M^*}$ to be the image of the Hecke algebra in the endomorphism rings of the relevant dual modules. Note that there are natural maps $\mathbb{T}_M\rightarrow \mathbb{T}_L$ and $\mathbb{T}_{M^*}\rightarrow \mathbb{T}_{L^*}$. If $I$ is an ideal of $\mathbb{T}^{(N\delta p)}$ we write $I_L$ for the image of $I$ in $\mathbb{T}_L$ and $I'_M$ for the image of $I\cap \mathbb{T}^{(N\delta pl)}$ in $\mathbb{T}_M$

 We can now state and prove the main theorem of this section. 
\begin{theorem}\label{supporttheorem}Suppose $\mathfrak{M}$ is a non-Eisenstein maximal ideal of $\mathbb{T}^{(N\delta p)}$ containing $T_l^2-(l+1)^2 S_l$. Further suppose that $\mathfrak{M}_L$ is in the support of the $\mathbb{T}_L$-module $L$. Then $\mathfrak{M}'_M$ is in the support of the $\mathbb{T}_M$-module $\mathrm{ker}(i^\dagger)\subset M$.
\end{theorem}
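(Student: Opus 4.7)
My plan is to argue by contradiction: assume $\ker(i^\dagger)_{\mathfrak{M}'_M} = 0$, and derive a contradiction with the hypothesis $L_{\mathfrak{M}_L} \neq 0$ by exploiting the congruence $T_l^2 \equiv (l+1)^2 S_l \pmod{\mathfrak{M}}$. The strategy is to propagate the assumed vanishing through the short exact sequence produced by Theorem \ref{lemma:trivialtorsion}, use the non-Eisenstein hypothesis to eliminate the torsion obstructions, and thereby force the endomorphism $i^\dagger i$ of $L^2$ to act surjectively after localising at $\mathfrak{M}_L$. The hypothesis on $\mathfrak{M}$ says precisely that the determinant of the matrix of $i^\dagger i$ vanishes in the residue field, contradicting surjectivity by Nakayama.

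Concretely, Theorem \ref{lemma:trivialtorsion}(ii) yields $\Lambda_0 = \Lambda_1$, so $i^\dagger \colon M \to L^2$ is surjective and sits in a short exact sequence $0 \to \ker(i^\dagger) \to M \xrightarrow{i^\dagger} L^2 \to 0$ of $\mathbb{T}^{(N\delta pl)}$-modules. Localising at $\mathfrak{m} := \mathfrak{M}\cap \mathbb{T}^{(N\delta pl)}$ (which pulls back from $\mathfrak{M}'_M$), the assumption produces an isomorphism $i^\dagger \colon M_\mathfrak{m} \xrightarrow{\sim} L^2_\mathfrak{m}$. Next I want to see that $(M/iL^2)_\mathfrak{m} = 0$. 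The torsion submodule of $M/iL^2$ is identified with $\Lambda_2/\Lambda_3$ via $i^\dagger$ (injective on $M \cap iL_F^2$ because $\ker(i^\dagger)\cap iL_F^2 = 0$ by Proposition \ref{propinj}); Theorem \ref{lemma:trivialtorsion}(i) together with non-Eisensteinness lets me pick a prime $q\equiv 1\pmod c$ with $T_q - q - 1 \notin \mathfrak{M}$, so $(T_q-q-1)^e$ is a unit in the localisation and kills this torsion. The torsion-free quotient embeds into $M_F/iL_F^2 = \ker(i^\dagger)_F$ using the generic splitting $M_F = iL_F^2 \oplus \ker(i^\dagger)_F$, and it vanishes at $\mathfrak{m}$ by the standing assumption on $\ker(i^\dagger)$.

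Hence $M_\mathfrak{m} = iL^2_\mathfrak{m}$, and transporting through the isomorphism $i^\dagger$ gives $i^\dagger i L^2_\mathfrak{m} = L^2_\mathfrak{m}$. Since $i^\dagger i$ is $\mathbb{T}^{(N\delta p)}$-linear (its matrix involves $T_l$ and $S_l$), I further localise at $\mathfrak{M}$ to obtain surjectivity of $i^\dagger i$ on $L^2_{\mathfrak{M}_L}$. Because $L^2$ is finitely generated over the local ring $\mathbb{T}_{L,\mathfrak{M}_L}$, Nakayama gives surjectivity of the reduced matrix $\bar A \in M_2(\mathbb{F})$, with $\mathbb{F} := \mathbb{T}_L/\mathfrak{M}_L$, acting on the nonzero module $(L/\mathfrak{M}_L L)^2$. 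Note that $S_l$ must be a unit modulo $\mathfrak{M}_L$: otherwise $S_l$ would kill $L/\mathfrak{M}_L L$ while also acting invertibly, forcing $L/\mathfrak{M}_L L = 0$ and contradicting the support hypothesis. The residue field $\mathbb{F}$ has characteristic zero (it is a finite extension of a residue field of $\OO(X)$), so $l+1 \in \mathbb{F}^\times$ as well, and $\det \bar A = (l+1)^2 - S_l^{-1}T_l^2 = 0$ in $\mathbb{F}$ by hypothesis. Thus $\bar A$ has $\mathbb{F}$-rank exactly one with nonzero first column $(l+1, T_l)$, its image in $(L/\mathfrak{M}_L L)^2$ is a free rank-one submodule, and the cokernel is isomorphic to $L/\mathfrak{M}_L L \neq 0$: contradiction.

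The residue-field rank-and-cokernel argument is clean in this setting, so I expect the main technical subtlety to lie in the bookkeeping of the various Hecke-algebra localisations (at $\mathfrak{M}'_M$ in $\mathbb{T}_M$, at $\mathfrak{m}$ in $\mathbb{T}^{(N\delta pl)}$, at $\mathfrak{M}$ in $\mathbb{T}^{(N\delta p)}$, and at $\mathfrak{M}_L$ in $\mathbb{T}_L$): one must check that surjectivity of $i^\dagger i$ really does pass from the $\mathbb{T}^{(N\delta pl)}_\mathfrak{m}$-level (where $T_l, S_l$ are not available) to the $\mathbb{T}_{L,\mathfrak{M}_L}$-level (where they are) by a further localisation, and that the final Nakayama step is legitimate. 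Once this is in place, the vanishing of the determinant in $\mathbb{F}$ combined with $L/\mathfrak{M}_L L \neq 0$ immediately forces the desired contradiction.
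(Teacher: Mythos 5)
Your proposal is correct, and it takes a genuinely different route from the paper's proof, working on the primal side by contradiction rather than on the dual side by a direct forward argument.

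The paper transports the support hypothesis from $L$ to $L^*$ via the perfect pairing, observes that the determinant condition forces $\mathfrak{M}'_{M^*}$ into the support of $Q := \Lambda_0^*/\Lambda_3^*$, then uses the filtration $\Lambda_0^* \supset \Lambda_1^* \supset \Lambda_2^* \supset \Lambda_3^*$ together with both parts of Theorem~\ref{lemma:trivialtorsion} to locate the support in $\Lambda_1^*/\Lambda_2^*$, and finally translates back to $\ker(i^\dagger)$ via the pairing $P_3$. You instead assume $\ker(i^\dagger)_{\mathfrak{M}'_M}=0$, use Theorem~\ref{lemma:trivialtorsion}(ii) (which gives $\Lambda_0=\Lambda_1$, i.e.\ surjectivity of $i^\dagger$) plus Theorem~\ref{lemma:trivialtorsion}(i) and the non-Eisenstein hypothesis to show $(M/iL^2)_{\mathfrak m}=0$, hence that $i^\dagger i$ becomes surjective on $L^2_{\mathfrak{M}_L}$, and then contradict this with the determinant condition and $L/\mathfrak{M}_L L\neq 0$. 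The two proofs are in effect dual to one another: the same determinant computation and the same two pieces of Ihara's lemma appear in both, but you never invoke the pairing $P_3$ (and need a little extra commutative algebra to handle the torsion-free part of $M/iL^2$ via the generic splitting $M_F = iL_F^2\oplus\ker(i^\dagger)_F$), whereas the paper avoids the contradiction framing at the cost of moving everything through the dual modules. One small terminological slip: reduction of a surjection modulo $\mathfrak{M}_L$ is surjective by right-exactness of the tensor product, not by Nakayama (Nakayama is the converse direction); this does not affect the argument. Also worth noting: the step $S_l\notin\mathfrak{M}_L$ that you justify explicitly is used implicitly in the paper's assertion that the determinant lies in $\mathfrak{M}$, so your writeup is actually a bit more careful on that point.
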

\begin{proof}
We write $\mathfrak{M}_{L^*}$ for the image of $\mathfrak{M}$ in $\mathbb{T}_{L^*}$ and $\mathfrak{M}'_{M^*}$ for the image of $\mathfrak{M}\cap \mathbb{T}^{(N\delta pl)}$ in $\mathbb{T}_{M^*}$. Since $\mathfrak{M}_L$ is in the support of $L$, and the perfect pairing between $L$ and $L^*$ is equivariant with respect to all of $\mathbb{T}^{(N\delta p)}$ (including $T_l$), we know that $\mathfrak{M}_{L^*}$ is in the support of $L^*$. Consider the module $$Q:=\Lambda_0^*/\Lambda_3^*=L^{*2}\left/
L^{*2}\begin{pmatrix}l+1 & T_l \\ S_l^{-1}T_l & l+1 \end{pmatrix},\right.$$ Since $L^{*2}_{\mathfrak{M}_{L^*}}\neq 0$ and $\mathrm{det}\begin{pmatrix}l+1 & T_l \\ S_l^{-1}T_l & l+1 \end{pmatrix}\in \mathfrak{M}$ we know that $Q_{\mathfrak{M}_{L^*}}\neq 0$, i.e. $\mathfrak{M}_{L^*}$ is in the support of $Q$. We can view $Q$ as a $\mathbb{T}_{M^*}$-module, with $\M'_{M^*}$ in its support. 

Theorem \ref{lemma:trivialtorsion} implies that if $\M'_{M^*}$ is in the support of $\Lambda_0^*/\Lambda_1^*$ or $\Lambda_2^*/\Lambda_3^*$ then it is Eisenstein, so it must be in the support of $\Lambda_1^*/\Lambda_2^*$. This quotient is a homomorphic image of $M^*/(M^*\cap j(L_F^{*2}))$, so $\M'_{M^*}$ is in the support of $M^*/(M^*\cap j(L_F^{*2}))$. Finally we can apply pairing $P_3$ (which is equivariant with respect to $\mathbb{T}^{(N\delta pl)}$) to conclude that $\M'_M$ is in the support of $\mathrm{ker}(i^\dagger)$. 
\end{proof}

\section{Applications}\label{apps}
In this section we explain some applications of the preceding results, including the proof of some cases of the conjecture mentioned in the introduction.

\subsection{Geometric level raising for $p$-adic modular forms}
We firstly describe the application of Theorem \ref{supporttheorem} to the conjecture of our Introduction. Chenevier \cite{MR2111512} extended the classical Jacquet-Langlands correspondence to a rigid analytic embedding from the eigencurve for a definite quaternion algebra to some part of the $\mathrm{GL}_2$ eigencurve. We may use this to translate the results of the previous section to the $\mathrm{GL}_2$ eigencurve. 

We state the case of Theorem $3$ in \cite{MR2111512} that we will use. We have primes $p,l$ and a coprime integer $N \ge 1$. Fix another prime $q$ and a character $\varepsilon$ of $\Z/Np\Z$. Let $\mathscr{E}$ be the tame level $\Gamma_1(N)\cap\Gamma_0(q)$ and character $\varepsilon$ reduced cuspidal eigencurve. Let $D/\Q$ be a quaternion algebra ramified at the infinite place and at $q$, and let $\mathscr{E}^D$ be the corresponding reduced eigencurve of tame level $U_1(N)$ and character $\varepsilon$. 

\begin{theorem}
There is a closed rigid analytic immersion $$JL_p: \mathscr{E}^D \hookrightarrow \mathscr{E}$$ whose image is the Zariski closure of the classical points of $\mathscr{E}$ that are new at $q$. This map is defined over weight space and is Hecke equivariant.
\end{theorem}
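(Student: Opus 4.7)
The natural approach is to invoke Buzzard's eigenvariety machine (the same formalism used in \cite{Bu2} to build $\mathscr{E}^D$ and $\mathscr{E}$), exploiting the principle that an eigenvariety is canonically determined by a very Zariski-dense set of classical points together with the systems of Hecke eigenvalues they carry. Both $\mathscr{E}^D$ and $\mathscr{E}$ are glued, over weight space $\mathscr{W}$, from affinoid pieces of the form $\mathrm{Sp}(\mathbb{T}^{\leq d})$, where $\mathbb{T}^{\leq d}$ is the image of the Hecke algebra (away from $N\delta p q$) acting on a slope $\leq d$ part of a space of overconvergent forms over a small affinoid $X \subset \mathscr{W}$. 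The classical points of both eigencurves are Zariski dense, because classical weights are Zariski dense in $\mathscr{W}$ and every slope $\leq d$ overconvergent eigenform at a classical weight of sufficiently large integer component is classical (the Coleman small-slope criterion).

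The first step is to establish the map on the level of Hecke eigensystems at classical points. The classical Jacquet--Langlands correspondence gives, for each fixed classical weight $k$, tame level $N$, character $\varepsilon$, an injection from the set of systems of Hecke eigenvalues attached to classical automorphic eigenforms on $D^\times$ into the set of systems of Hecke eigenvalues attached to classical modular eigenforms on $\mathrm{GL}_2/\Q$ which are new at $q$; moreover this image is exactly the set of new-at-$q$ systems. Thus the set of classical points of $\mathscr{E}^D$ injects, compatibly with the Hecke action and with projection to weight space, into the set of classical points of $\mathscr{E}$ whose image is contained in the new-at-$q$ locus.

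Next one applies Buzzard's interpolation lemma (see \cite{Bu2}): given two eigenvarieties $\mathscr{E}^D$ and $\mathscr{E}$ over $\mathscr{W}$, together with a very Zariski dense set $Z$ of classical points on $\mathscr{E}^D$ whose associated Hecke eigensystems also appear on $\mathscr{E}$ with the same weight, there is a unique morphism $JL_p : \mathscr{E}^D \to \mathscr{E}$ of rigid analytic spaces over $\mathscr{W}$ which is Hecke-equivariant and realises the given map on $Z$. Concretely, over a small enough affinoid $X \subset \mathscr{W}$ and for a fixed slope bound $d$, one gets a map of $\OO(X)$-algebras $\mathbb{T}^{\mathscr{E}, \leq d}_X \to \mathbb{T}^{\mathscr{E}^D, \leq d}_X$ by sending each Hecke operator on the $\mathrm{GL}_2$ side to the corresponding Hecke operator on the quaternionic side; the compatibility on the dense set $Z$ plus Zariski density guarantees that this is well defined at the level of algebras, and dualising gives the desired morphism. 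To identify the image as the Zariski closure of the new-at-$q$ classical locus, one uses again that these classical points are Zariski dense in $\mathscr{E}^D$ and that $JL_p$ sends them onto the new-at-$q$ classical points of $\mathscr{E}$.

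The main obstacle is upgrading the morphism to a \emph{closed immersion}. Locally one must show that the map of Hecke algebras $\mathbb{T}^{\mathscr{E}, \leq d}_X \to \mathbb{T}^{\mathscr{E}^D, \leq d}_X$ is surjective after restriction to an affinoid neighbourhood of the new-at-$q$ locus on the $\mathrm{GL}_2$ side. This reduces, by density of classical points and Nakayama, to showing that for a sufficiently generic classical weight $k$ the Hecke eigensystems on $S_k^{\mathrm{new\text{-}}q}(\Gamma_1(N)\cap\Gamma_0(q),\varepsilon)$ with slope $\leq d$ occur with the same multiplicity as the corresponding eigensystems on the quaternionic side, which is the strong multiplicity-one statement built into classical Jacquet--Langlands. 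The surjectivity of Hecke algebras then follows from the fact that the \emph{new}-at-$q$ Hecke algebra is generated by operators away from $q$ (one has to be careful about the role of $U_q$ and of old-form-at-$q$ contamination in $\mathscr{E}$, which is precisely why the image is only the Zariski closure of new-at-$q$ classical points, and why the statement is restricted to the new-at-$q$ part).
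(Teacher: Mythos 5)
The paper does not prove this theorem itself: it is quoted verbatim as a case of Th\'eor\`eme~3 in Chenevier's work \cite{MR2111512}, which is what you should be comparing against. Your sketch does capture the broad strategy of Chenevier's proof (density of classical points, transfer of classical Hecke eigensystems via the classical Jacquet--Langlands correspondence, and an interpolation argument over weight space to produce a morphism of eigenvarieties that is then upgraded to a closed immersion). So the overall shape of the argument is faithful.

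That said, the step you yourself identify as the ``main obstacle'' --- proving the morphism is a \emph{closed immersion} --- is where your sketch is not yet a proof. Surjectivity of $\mathbb{T}^{\mathscr{E},\leq d}_X \to \mathbb{T}^{\mathscr{E}^D,\leq d}_X$ cannot follow simply from the assertion that eigensystems occur with matching multiplicity at generic classical weight; you need that the image of the prime-to-$q$ Hecke algebra already generates (after localisation) the whole local Hecke algebra on the quaternionic side as an $\OO(X)$-algebra, and you need a Nakayama-type argument lifting this from a single specialisation to the affinoid. This is precisely where Chenevier does genuine work, comparing the local pieces of the two eigenvarieties carefully and using that the classical new-at-$q$ locus is very Zariski dense in $\mathscr{E}^D$ (not merely in $\mathscr{E}$). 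Your parenthetical about $U_q$ and old-form contamination gestures at the right concerns but leaves them unaddressed. Given that the paper is simply invoking Chenevier's result as a black box, the appropriate move is to cite \cite{MR2111512} rather than re-derive it; but if you do want to sketch the proof, the closed-immersion step needs the multiplicity-one input to be turned into an honest surjectivity statement for Hecke algebras on affinoid pieces, not just at points.
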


We get the same result if we change the level of $\mathscr{E}$ to $\Gamma_1(N)\cap\Gamma_0(ql)$ (call this eigencurve $\mathscr{E}'$) and change the level of $\mathscr{E}^D$ to $U_1(N)\cap U_0(l)$ (call this eigencurve $\mathscr{E}^{D'}$), where we construct these eigencurves using the Hecke operators at $l$ in addition to the usual Hecke operators away from the level. This allows us to relate $\mathscr{E}'$ and the two-covering $\mathscr{E}^{\mathrm{old}}$ of $\mathscr{E}$ corresponding to taking roots of $l$th Hecke polynomial.

\begin{lemma}\label{OLD}
There is a closed embedding $\mathscr{E}^{\mathrm{old}}\hookrightarrow \mathscr{E}'$, with image the Zariski closure of the classical $l$-old points in $\mathscr{E}'$.
\end{lemma}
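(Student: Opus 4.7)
The plan is to realise $\mathscr{E}^{\mathrm{old}}$ as the eigenvariety attached to the $l$-old Hecke-stable submodule of the module of overconvergent modular forms at tame level $\Gamma_1(N)\cap\Gamma_0(ql)$, and deduce the closed embedding from this identification.

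Work locally over an admissible affinoid $W\subset\mathscr{W}$. Buzzard's construction presents $\mathscr{E}|_W$ as $\Spec$ of the image of $\mathbb{T}^{(Nqp)}[U_p]$ acting on the $U_p$-finite slope part of a Banach $\OO(W)$-module $\mathbf{L}_W$ of overconvergent modular forms of tame level $\Gamma_1(N)\cap\Gamma_0(q)$; analogously $\mathscr{E}'|_W$ is built from $\mathbb{T}^{(Nqlp)}[U_l,U_p]$ acting on a Banach module $\mathbf{M}'_W$ at tame level $\Gamma_1(N)\cap\Gamma_0(ql)$. Inside $\mathbf{M}'_W$ sits the Hecke-stable submodule of $l$-old forms, the image of a natural embedding $i:\mathbf{L}_W\oplus\mathbf{L}_W\hookrightarrow\mathbf{M}'_W$ (the modular-form analogue of the map of Section~\ref{oldnew}, whose injectivity is proved as in Proposition~\ref{propinj}). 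A standard calculation shows that in the basis given by $i$, the operator $U_l$ acts on the $l$-old subspace by the matrix
\[
\begin{pmatrix} T_l & 1 \\ -lS_l & 0 \end{pmatrix},
\]
whose characteristic polynomial is $X^2-T_lX+lS_l$. This is moreover the minimal polynomial of $U_l$, because at a Zariski dense set of classical points the two $U_l$-roots are distinct, preventing $U_l$ from satisfying any linear polynomial over the reduced ring $\OO(\mathscr{E}|_W)$. Consequently the image of the Hecke algebra acting on the finite slope part of the $l$-old subspace is precisely $\OO(\mathscr{E}|_W)[U_l]/(U_l^2-T_lU_l+lS_l)=\OO(\mathscr{E}^{\mathrm{old}}|_W)$.

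A Hecke-stable inclusion of modules always induces a surjection of Hecke algebra images: any operator annihilating the ambient module annihilates the submodule. Applied to the inclusion of the $l$-old subspace in $\mathbf{M}'_W$, this yields a surjection $\OO(\mathscr{E}'|_W)\twoheadrightarrow\OO(\mathscr{E}^{\mathrm{old}}|_W)$, equivalently a closed immersion $\mathscr{E}^{\mathrm{old}}|_W\hookrightarrow\mathscr{E}'|_W$. These local embeddings are canonical (defined by the tautological Hecke action) and glue to the desired global closed immersion. To identify the image, observe that classical points of $\mathscr{E}^{\mathrm{old}}$ are Zariski dense (pulled back from the dense classical points of $\mathscr{E}$) and correspond bijectively to classical $l$-old eigenforms at level $\Gamma_1(N)\cap\Gamma_0(ql)$: each classical point $\phi$ of $\mathscr{E}$ paired with a root $\alpha$ of $X^2-a_l(f_\phi)X+ls_l(f_\phi)$ is sent to the $l$-old eigenform with $U_l$-eigenvalue $\alpha$ built from $f_\phi$. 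Conversely every classical $l$-old form on $\mathscr{E}'$ arises this way, so the image, being closed and containing this dense set of classical $l$-old points, is precisely their Zariski closure.

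The main subtle step is verifying that the minimal polynomial of $U_l$ on the $l$-old subspace is indeed the full quadratic rather than a linear factor of it; this hinges on reducedness of the Hecke algebra image together with density of classical points outside the vanishing locus of the discriminant $T_l^2-4lS_l$.
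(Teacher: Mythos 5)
Your proof follows essentially the same route as the paper, just with more of the details spelled out. The paper's argument is extremely terse — it defines the old submodules $M_X^{\mathrm{old}}$ as the image of the degeneracy map $M_X^2\hookrightarrow M_X'$, notes they are Hecke stable, and then asserts that "the lemma follows from the observation that applying the eigenvariety machine to the Banach modules $M_X^{\mathrm{old}}$ gives $\mathscr{E}^{\mathrm{old}}$." What you do is unpack that observation: you identify the image of the Hecke algebra on the old submodule as $\OO(\mathscr{E}|_W)[U_l]/(U_l^2-T_lU_l+lS_l)$ by computing the $U_l$-matrix and checking that the quadratic is the minimal polynomial (via density of classical points with distinct $l$-Hecke roots and reducedness), and you make explicit the standard fact that a Hecke-stable submodule induces a surjection on Hecke-algebra images, hence a closed immersion of the associated eigenvarieties. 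Both proofs gloss over the same technicalities (compatibility of slope decompositions with the submodule, verification that $M_X^{\mathrm{old}}$ inherits property $(Pr)$ and a compact $U_p$, and gluing of the local closed immersions); your version is a reasonable filling-in of the paper's one-line appeal to the eigenvariety machine.
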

\begin{proof}
Given $X$ an affinoid subdomain of $\mathscr{W}$, let $M_X$ and $M_X'$ denote the Banach $\OO(X)$-modules of families (weight varying over $X$) of overconvergent modular forms of tame levels $\Gamma_1(N)\cap\Gamma_0(q)$ and $\Gamma_1(N)\cap\Gamma_0(ql)$ respectively, as defined in section 7 of \cite{Bu2}. The two degeneracy maps from level $\Gamma_1(N)\cap\Gamma_0(ql)$ to level $\Gamma_1(N)\cap\Gamma_0(q)$ give a natural embedding $M_X^2 \rightarrow M_X'$. Denote the image of this map by $M_X^\mathrm{old}$ - it is stable under all Hecke operators (including at $l$). The lemma follows from the observation that applying the eigenvariety machine of \cite{Bu2} to the Banach modules $M_X^\mathrm{old}$ (with $X$ varying) gives the space $\mathscr{E}^{\mathrm{old}}$. 
\end{proof}

Let $\mathscr{Z}\subset \mathscr{E}'$ be the Zariski closure of the classical points in $\mathscr{E}'$ corresponding to forms new at $l$. Proposition 4.7 of \cite{MR2111512} shows that $\mathscr{Z}$ can be identified with the points of $\mathscr{E}'$ lying in a one dimensional family of \emph{$l$-new} points, where $l$-new means they come from overconvergent modular forms in the kernel of the map analogous to $i^\dagger$ in the $GL_2$ setting. The following theorem corresponds to the conjecture in the introduction for points of $\mathscr{E}'$ in the image of $JL_p$.

\begin{theorem}\label{raise}
Suppose we have a point $\phi \in \mathscr{E}$ lying in the image of $JL_p$, with $T_l^2(\phi)-(l+1)^2S_l(\phi)=0$. Let the roots of the $l$th Hecke polynomial corresponding to $\phi$ be $\alpha$ and $l \alpha$ where $\alpha \in \C_p$. Then the point over $\phi$ of $\mathscr{E}^\mathrm{old}$ corresponding to $\alpha$ lies in $\mathscr{Z}$.
\end{theorem}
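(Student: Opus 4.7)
The plan is to transfer the problem to the quaternionic eigencurve $\mathscr{E}^{D}$ via Chenevier's Jacquet-Langlands embedding, apply Theorem~\ref{supporttheorem} on the $D$-side to produce an $l$-new family, and then transfer back to $\mathscr{E}'$ via the level-$(Nql)$ version of $JL_{p}$. Throughout, $D$ denotes the quaternion algebra of discriminant $q$.

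Let $\phi^{D} \in \mathscr{E}^{D}$ be the unique preimage of $\phi$ under $JL_{p}$. I would choose a one-dimensional irreducible connected smooth $K$-affinoid neighbourhood $X \subset \mathscr{W}$ of the weight of $\phi^{D}$, a radius $r = p^{-n}$, an integer $m \geq 1$ making $\kappa$ be $rp^{-m}$-analytic, and a slope bound $d$ at least equal to the slope of $U_{p}(\phi^{D})$, all chosen small/large enough that the slope decomposition $\mathbf{S}^{D}_{X}(U;r) = L \oplus \mathbf{N}$ of Section~\ref{mainbody} exists. Set $U = U_{1}(Np^{m})$, $V = U_{1}(Np^{m}) \cap U_{0}(l)$, and form $L$, $M$, $i$, $i^{\dagger}$ as in Section~\ref{oldnew}. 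Then $\phi^{D}$ corresponds to a maximal ideal $\mathfrak{M} \triangleleft \mathbb{T}^{(Nqp)}$ with $\mathfrak{M}_{L}$ in the support of $L$; Hecke-equivariance of $JL_{p}$ gives $T_{l}^{2} - (l+1)^{2} S_{l} \in \mathfrak{M}$; and one may assume $\mathfrak{M}$ is non-Eisenstein, since the irreducible components of $\mathscr{E}$ relevant to Paulin's conjecture carry residually irreducible Galois representations. Theorem~\ref{supporttheorem} then yields that $\mathfrak{M}'_{M}$ lies in the support of $\ker(i^{\dagger}) \subset M$.

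The submodule $\ker(i^{\dagger})$ is Hecke-stable and $U_{p}$-stable (as $i^{\dagger}$ is a sum of double coset operators with trivial $p$-component), so Buzzard's eigenvariety machine cuts out from it a Zariski closed one-dimensional $l$-new subvariety of $\mathscr{E}^{D'}$; the support statement produces a point $\psi^{D}$ of this subvariety lying over $\phi^{D}$. Pushing forward via $JL_{p}: \mathscr{E}^{D'} \hookrightarrow \mathscr{E}'$ gives a point $\psi \in \mathscr{E}'$ over $\phi$, which lies in $\mathscr{Z}$ because the classical Jacquet-Langlands correspondence preserves the $l$-new/special property and classical weights are Zariski dense in $X$.

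The final and, I expect, most delicate step is to identify $\psi$ with the $\alpha$-lift $\phi_{\alpha}$ of $\phi$ in $\mathscr{E}^{\mathrm{old}}$ rather than with the $l\alpha$-lift. Since $\psi$ and $\phi_{\alpha}$ both sit over $\phi \in \mathscr{E}$, the question reduces to showing $U_{l}(\psi^{D}) = \alpha$. For an $l$-new eigensystem the local representation at $l$ is a twist of Steinberg, which forces $U_{l}(\psi^{D})$ to equal the ``small'' root of $X^{2} - T_{l}(\phi) X + l S_{l}(\phi)$, namely $\alpha$ under the normalisation of the statement; this then places $\phi_{\alpha} = \psi$ in $\mathscr{Z}$, as required.
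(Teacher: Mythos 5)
The overall architecture you propose---transfer to the quaternionic side via $JL_p$, invoke Theorem~\ref{supporttheorem}, produce a one-dimensional $l$-new family and push it back to $\mathscr{E}'$---does match the paper's proof. But there are two genuine gaps in the way you fill in the middle and the end.

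First, the step where you assert that ``Buzzard's eigenvariety machine cuts out from $\ker(i^\dagger)$ a Zariski closed one-dimensional $l$-new subvariety of $\mathscr{E}^{D'}$'' is not available at this point, and in fact the author explicitly warns against exactly this move in section \ref{neweigen}: it is unclear whether the newform modules satisfy the base-change hypotheses needed to run the eigenvariety machine, and the equidimensionality of the $l$-new locus (Theorem~\ref{thm:neweigen}) is proved \emph{using} Theorem~\ref{raise}, so invoking it here is circular. What the paper actually does is purely local: $\ker(i^\dagger)$ is a submodule of the projective $\OO(X)$-module $M$, hence $\OO(X)$-torsion-free, so every associated prime of $\ker(i^\dagger)$ over $\mathbb{T}_M$ contracts to $(0)$ in $\OO(X)$; thus the support of $\ker(i^\dagger)$ is a union of one-dimensional irreducible closed subsets, and since $\mathfrak{M}'_M$ lies in that support one can choose a height-one prime $\mathfrak{p}\subset\mathfrak{M}'_M$ in the support. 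It is this $\mathfrak{p}$ that Chenevier's local eigenvariety construction (Proposition 6.2.4 of \cite{Chenun}) turns into a genuine one-dimensional family of $l$-new eigenforms through $\phi'$. You should replace the global eigenvariety-machine claim with this local argument.

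Second, your identification of the $U_l$-eigenvalue via ``for an $l$-new eigensystem the local representation at $l$ is a twist of Steinberg'' begs the question at a non-classical point: attaching $\mathrm{GL}_2(\Q_l)$-representations to overconvergent eigensystems and controlling their behaviour in families is precisely what Paulin's work (and hence this conjecture) is about, so it cannot be assumed as input. Moreover, even if one grants a relation of the form $U_l^2 = S_l$ on $\mathscr{Z}$ by Zariski density of classical points, this only pins down $U_l(\psi^D) = \pm\alpha$ and does not by itself distinguish $\alpha$ from $-\alpha$; note $-\alpha$ is \emph{not} a root of $X^2 - T_l(\phi)X + lS_l(\phi)$ so this really matters. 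The paper resolves this by a direct double-coset computation: an eigenform lying in $\ker(i^\dagger_x)$ and matching the $\m'_M$-eigensystem is, by the explicit matrix form of $i^\dagger_x i_x$, necessarily $i_x(\alpha f, -f)$, the $l$-stabilisation with $U_l$-eigenvalue $\alpha=T_l/(l+1)$. Finally, your treatment of the non-Eisenstein hypothesis (``one may assume \ldots'') also falls short of a proof: in the paper it follows directly because $\mathscr{E}$ is the \emph{cuspidal} eigencurve, so the $p$-adic Galois representation attached to $\phi$ is irreducible, which rules out the Eisenstein congruences by the lemma in the paper.
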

\begin{proof}

We pick $d$, $r$ and $\alpha$ such that the automorphic form corresponding to the preimage of $\phi$ under $JL_p$ is $r$-overconvergent of slope $\le d$ and level $\UU$. Now fix a closed ball in $\mathscr{W}$, containing the weight of $\phi$, which is small enough (note that `small enough' depends on $d$, $r$ and $\alpha$) to apply the local eigenvariety construction described in section 6.2 of \cite{Chenun}. Denote this ball by $X$. As usual the $rp^{-\alpha}$-analytic character $\Z_p^\times \rightarrow \OO(X)$ induced by the embedding $X \hookrightarrow \mathscr{W}$ is denoted by $\kappa$. 
 
The system of Hecke eigenvalues given by $\phi$ corresponds to a maximal ideal $\mathfrak{M}$ in the $\OO(X)$ Hecke algebra $\mathbb{T}^{(Npq)}$. We know that $T_l^2-(l+1)^2S_l \in \mathfrak{M}$. If we set $L=\OCN\s$ as before, and use the notation of the previous section, then $\mathfrak{M}_L$ is in the support of $L$. At this stage Theorem \ref{supporttheorem} applies to the ideal $\mathfrak{M}$, which is not Eisenstein since the Galois representation attached to $\phi$ is irreducible (recall that we are working on the cuspidal part of the eigencurve). Therefore we know that $\mathfrak{M}'_M$ is in the support of $\mathrm{ker}(i^\dagger)\subset M$. We can then take a height one prime ideal $\mathfrak{p}\subset \mathfrak{M}'_M$ in the support of $\mathrm{ker}(i^\dagger)\subset M$, and then this corresponds (by proposition 6.2.4 of \cite{Chenun}) to a $p$-adic family of automorphic forms, new at $l$, passing through a point $\phi'$ with system of Hecke eigenvalues the same as those for $\phi$ away from $l$. Now applying the map $JL_p$ we see that one of the points over $\phi$ must lie in $\mathscr{Z}$. A calculation using the fact that $\phi'$ comes from an eigenform in the kernel of $i^\dagger$ shows that the point corresponds to the root $\alpha$.
\end{proof}
To translate this theorem into the language of the introduction, note that $\mathscr{E}^\mathrm{old}$ corresponds to the generically unramified principal series components of $\mathscr{E}'$, whilst $\mathscr{Z}$ corresponds to the generically special or supercuspidal components. We may identify the point over $\phi$ lying in $\mathscr{Z}$ as the one whose attached $\mathrm{GL}_2(\Q_l)$-representation is special. 

\subsection{Eigenvarieties of newforms}\label{neweigen}
We return to the situation of a definite quaternion algebra $D$ over $\Q$ with arbitrary discriminant $\delta$ prime to $p$. Denote the levels $\UU$ by $U$ and $\VV$ by $V$, as before. We denote by $\mathscr{E}^D$ the tame level $U_1(N)\cap U_0(l)$ reduced eigencurve for $D$. Suppose $\phi$ is a point of $\mathscr{E}^D$, with weight $x$. We say that $\phi$ is \emph{$l$-new} if it corresponds to a Hecke eigenform in the kernel of the map $i^\dagger_x: \OCVS \rightarrow \OCS$, where this is defined as in section \ref{oldnew} by $i^\dagger_x (f):=(f|[V 1 U],f|[V \eta_l^{-1} U])$. Denote by $\mathscr{Z}$ the Zariski closure of the points in $\mathscr{E}^D$ arising from \emph{classical} $l$-new forms. We have the following proposition, due to Chenevier.

\begin{proposition}
\begin{enumerate}\item The set of $x$ in $\mathscr{E}^D$ that are $l$-new is the set of points of a closed reduced analytic subspace $\mathscr{E}^D_{\mathrm{new}} \subset \mathscr{E}^D$. 
\item $\mathscr{Z}$ is a closed subspace of $\mathscr{E}^D_{\mathrm{new}}$, and its complement is the union of irreducible components of dimension $0$ in $\mathscr{E}^D_{\mathrm{new}}$. 
\item A point of $\mathscr{E}^D_{\mathrm{new}}$ lies in $\mathscr{Z}$ if and only if it lies in a one-dimensional family of points in $\mathscr{E}^D_{\mathrm{new}}$.\end{enumerate}\end{proposition}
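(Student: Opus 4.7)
The plan is to construct $\mathscr{E}^D_{\mathrm{new}}$ directly via Buzzard's eigenvariety machine applied to the Hecke-stable submodule $\ker(i^\dagger)\subset M$. Working locally over an affinoid $X\subset \mathscr{W}$ small enough for the slope-$\leq d$ decomposition to exist, the map $i^\dagger:M\to L\times L$ is $\OO(X)$-linear and commutes with $U_p$ and with every operator in $\mathbb{T}^{(N\delta pl)}$, so $\ker(i^\dagger)$ is a closed, $U_p$-stable, Hecke-stable submodule of $M$. Because $\OO(X)$ is a Dedekind domain and $M$ is finite projective, $\ker(i^\dagger)$ is itself finite projective and satisfies property $(Pr)$. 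Feeding this datum into the construction of \cite{Bu2} and gluing over varying $X$ and $d$ produces a rigid analytic space $\mathscr{E}^D_{\mathrm{new}}$; the surjection $\mathbb{T}_M\twoheadrightarrow \mathbb{T}_{\ker(i^\dagger)}$ coming from restriction of endomorphisms yields a closed immersion $\mathscr{E}^D_{\mathrm{new}}\hookrightarrow \mathscr{E}^D$ whose underlying set is precisely the $l$-new locus. This proves (1).

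For (2) and (3), the key observation is that the image Hecke algebra $\mathbb{T}_{\ker(i^\dagger)}$ acts faithfully on the non-zero finite projective $\OO(X)$-module $\ker(i^\dagger)$, and hence is torsion-free as an $\OO(X)$-module. Each minimal prime of $\mathbb{T}_{\ker(i^\dagger)}$ is an associated prime, so is the annihilator of some non-zero element, and torsion-freeness then forces its contraction to $\OO(X)$ to be zero. Thus every irreducible component of $\Spec(\mathbb{T}_{\ker(i^\dagger)})$ dominates $X$ and has dimension one, so $\mathscr{E}^D_{\mathrm{new}}|_X$ is equidimensional of dimension one wherever non-empty. By density of small-slope classical points on any positive-dimensional component of an eigencurve (cf.\ \cite{Chenun}), each such component contains a Zariski-dense set of classical points; by the classicality theorem for small-slope overconvergent forms these correspond to classical eigenforms in $\ker(i^\dagger_x)$, i.e.\ to classical $l$-new forms. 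Hence every one-dimensional component of $\mathscr{E}^D_{\mathrm{new}}$ is contained in $\mathscr{Z}$, so the complement $\mathscr{E}^D_{\mathrm{new}}\setminus \mathscr{Z}$ is a (possibly empty) union of $0$-dimensional components. Part (3) then follows: the ``if'' direction is part of (2), while conversely $\mathscr{Z}$ is the closure of classical $l$-new points, each of which lies on a one-dimensional component by the local analysis, so $\mathscr{Z}$ is exactly the union of the one-dimensional components of $\mathscr{E}^D_{\mathrm{new}}$.

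The main obstacle is the dimension step in the second paragraph: while torsion-freeness of $\mathbb{T}_{\ker(i^\dagger)}$ follows easily from faithful action on a projective module, the passage from torsion-freeness to equidimensionality of $\Spec(\mathbb{T}_{\ker(i^\dagger)})$ uses the associated-prime substitute for Cohen-Seidenberg going-down, which relies essentially on the $1$-dimensionality of $\OO(X)$. One must also verify carefully that the classical points produced by density correspond to classical $l$-new forms in the strict sense of lying in $\ker(i^\dagger_x)$, not merely sharing an eigensystem with an $l$-new overconvergent form; this is exactly where a classicality criterion of Coleman type enters.
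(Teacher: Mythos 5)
Your first paragraph is precisely the construction that the paper explicitly flags as unresolved. In the subsection ``Modules of newforms and the eigenvariety machine,'' Newton writes that applying Buzzard's machine to $\ker(i^\dagger)\subset M$ requires the modules $M^{\mathrm{new}}_{X,r}$ to behave well under base change between affinoid subdomains, and that he is ``not sure whether this should be true or not.'' You assert this without verification: the local picture you describe (slope decomposition, finite projectivity over the Dedekind domain $\OO(X)$, property $(Pr)$) is available only over sufficiently small one-dimensional $X$, and supplying the coherence data needed to glue these local pieces is exactly the open issue. The paper's own proof of this proposition is a one-line citation of Chenevier's Proposition 4.7 of \cite{MR2111512}, which realises $\mathscr{E}^D_{\mathrm{new}}$ as a closed subspace of $\mathscr{E}^D$ by a different method rather than reconstructing it from the kernel modules.

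Your second paragraph also proves too much. The chain (faithful Hecke action on a finite projective module, hence a torsion-free Hecke algebra, hence every minimal prime contracts to $(0)$ in $\OO(X)$, hence equidimensionality of dimension one) would show that $\mathscr{E}^D_{\mathrm{new}}$ has no zero-dimensional components, i.e.\ $\mathscr{E}^D_{\mathrm{new}}=\mathscr{Z}$. But that is exactly Theorem \ref{thm:neweigen}, which the paper establishes only after deploying the full level-raising apparatus (the pairings, the Ihara-type Theorem \ref{lemma:trivialtorsion}, and Theorem \ref{supporttheorem}); part (2) of the proposition you are proving allows for isolated zero-dimensional components precisely because they cannot be ruled out at this stage. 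The concrete gap sits in the assertion that the underlying set of your space is the $l$-new locus. Being $l$-new at a point of weight $x$ is a fibrewise condition, via $\ker(i^\dagger_x)$, and to match it with eigensystems occurring in the family-level module $\ker(i^\dagger)$ you need the natural injection $\ker(i^\dagger)\otimes_{\OO(X)}\OO(X)/\m\hookrightarrow\ker(i^\dagger_x)$ to be surjective; its cokernel is $\mathrm{Tor}_1^{\OO(X)}(\Lambda_0/\Lambda_1,\OO(X)/\m)$, and the vanishing of $\Lambda_0/\Lambda_1$ is part (2) of Theorem \ref{lemma:trivialtorsion}, which you neither invoke nor reprove. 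Without it, the set of points your construction produces could be strictly smaller than the $l$-new locus, and the missing points are exactly where the potential zero-dimensional components would live.
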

\begin{proof}
Exactly as for Proposition 4.7 of \cite{MR2111512}.
\end{proof}

We now apply the results of section \ref{mainbody} to show that a point of $\mathscr{E}^D_{\mathrm{new}}\backslash \mathscr{Z}$ lies in a one-dimensional family of points in $\mathscr{E}^D_{\mathrm{new}}$, so by contradiction we can conclude that $\mathscr{E}^D_{\mathrm{new}}$ is equal to $\mathscr{Z}$.

\begin{theorem}\label{thm:neweigen}
$\mathscr{E}^D_{\mathrm{new}}$ is equal to $\mathscr{Z}$. In particular, $\mathscr{E}^D_{\mathrm{new}}$ is equidimensional of dimension $1$.
\end{theorem}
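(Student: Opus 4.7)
The plan is to show that every point of $\mathscr{E}^D_{\mathrm{new}}$ lies in a one-dimensional family of $l$-new points; by part $(3)$ of the preceding proposition this forces all such points into $\mathscr{Z}$, proving the theorem. Fix $\phi \in \mathscr{E}^D_{\mathrm{new}}$ of weight $x$ corresponding to an eigenform $f_\phi \in \ker(i^\dagger_x)$. Choose a slope bound $d \ge \mathrm{slope}_{U_p}(f_\phi)$, a radius $r=p^{-n}$, and a wild level $\alpha$ large enough that $f_\phi \in \OCVS\s$ and $\kappa_x$ is $rp^{-\alpha}$-analytic; then pick a one-dimensional smooth irreducible connected affinoid $X \subset \mathscr{W}$ containing $x$ (e.g.\ a sufficiently small closed disc) over which the slope-$\le d$ decomposition holds uniformly. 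All the modules and maps of section \ref{oldnew} are then at our disposal. Let $\mathbb{T}$ denote the image in $\mathrm{End}_{\OO(X)}(M)$ of the Hecke algebra locally realising $\mathscr{E}^D$ (namely $\mathbb{T}^{(N\delta pl)}$ together with $U_p$ and $U_l$), and let $\mathfrak{m}\subset\mathbb{T}$ be the maximal ideal cut out by the Hecke eigensystem of $\phi$.

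The central algebraic step is that $\ker(i^\dagger) \subset M$ is a Hecke-stable $\OO(X)$-submodule of the finitely generated projective $\OO(X)$-module $M$ (property $(Pr)$ for $M$), hence is $\OO(X)$-torsion-free and, since $\OO(X)$ is a Dedekind domain, itself finitely generated and projective. Writing $I := \mathrm{Ann}_\mathbb{T}(\ker(i^\dagger))$, the inclusion $\mathbb{T}/I \hookrightarrow \mathrm{End}_{\OO(X)}(\ker(i^\dagger))$ realises $\mathbb{T}/I$ as an $\OO(X)$-submodule of a projective $\OO(X)$-module, so $\mathbb{T}/I$ is itself finite and flat over $\OO(X)$. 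Consequently the support $\mathrm{Spec}(\mathbb{T}/I)$ of $\ker(i^\dagger)$ is finite flat over $X$, and in particular equidimensional of dimension one.

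Finally, flatness of $L^2$ over $\OO(X)$ combined with the uniform slope-$\le d$ identifications makes the natural map $\ker(i^\dagger) \otimes_{\OO(X)} K' \to \ker(i^\dagger_x|_{\OCVS\s})$ an isomorphism (where $K'$ is the residue field at $x$), and the right-hand side contains $f_\phi \neq 0$. Thus $\mathfrak{m}$ lies in the support of $\ker(i^\dagger)$ and hence in a one-dimensional irreducible component of $\mathrm{Spec}(\mathbb{T}/I)$; the local eigenvariety construction of section 6.2 of \cite{Chenun} turns this component into a one-dimensional family of $l$-new Hecke eigensystems specialising to $\phi$, and part $(3)$ of the preceding proposition then puts $\phi$ in $\mathscr{Z}$. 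The argument is noticeably softer than the level-raising theorem: neither Theorem \ref{lemma:trivialtorsion} nor the pairing $P_3$ is used directly, and all one really exploits about $\ker(i^\dagger)$ is that it sits inside the projective $\OO(X)$-module $M$. The only point that must be checked carefully, and hence the main (if rather mild) obstacle, is the specialisation identity $\ker(i^\dagger)\otimes K' \cong \ker(i^\dagger_x|_{\OCVS\s})$, which is precisely where the uniform slope hypothesis on $X$ and the projectivity of $L^2$ enter.
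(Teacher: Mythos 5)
Your argument has a genuine gap at the specialisation step, which is exactly where you flag that care is needed. You claim that flatness (projectivity) of $L^2$ together with the uniform slope-$\le d$ identifications forces the natural map $\ker(i^\dagger)\otimes_{\OO(X)}K'\to\ker(i^\dagger_x)$ to be an isomorphism. In fact these hypotheses only give an \emph{injection}. Writing $C:=\mathrm{coker}(i^\dagger)$ and splitting the four-term exact sequence $0\to\ker(i^\dagger)\to M\to L^2\to C\to 0$ into two short exact sequences, tensoring with $K'=\OO(X)/\m$ and noting that $\mathrm{im}(i^\dagger)\subset L^2$ is torsion-free (hence flat over the Dedekind domain $\OO(X)$), one obtains
\begin{equation*}
0\longrightarrow \ker(i^\dagger)\otimes_{\OO(X)} K' \longrightarrow \ker(i^\dagger_x) \longrightarrow \mathrm{Tor}_1^{\OO(X)}(C,K')\longrightarrow 0.
\end{equation*}
The obstruction to surjectivity is $\mathrm{Tor}_1^{\OO(X)}(C,K')$, and no amount of projectivity of $L^2$ controls the torsion of its quotient $C$. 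What kills this term is Theorem~\ref{lemma:trivialtorsion}(2), which says $\Lambda_0/\Lambda_1=L^2/i^\dagger M=0$, i.e. $i^\dagger$ is surjective. You explicitly decline to use Theorem~\ref{lemma:trivialtorsion}, so the step is unsupported. The gap is not cosmetic: a form can be $l$-new at a single weight $x$ without lifting to an $l$-new family over a neighbourhood --- such forms are precisely those that are simultaneously $l$-old and $l$-new at $x$ and contribute to $\mathrm{Tor}_1^{\OO(X)}(C,K')$. Indeed the remark in section~\ref{neweigen} (Modules of newforms and the eigenvariety machine) points out that the base-change behaviour of $\ker(i^\dagger)$ is exactly the nontrivial issue, with the results of the paper (including Ihara's lemma) being what establishes it for specialisation to a point.

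Even if you import Theorem~\ref{lemma:trivialtorsion}(2), the resulting argument is not ``noticeably softer'' than the paper's --- that theorem is the main technical input to the whole level-raising machinery. The paper's actual proof takes a different route that never needs the specialisation identity: starting from a hypothetical $\phi\in\mathscr{E}^D_{\mathrm{new}}\setminus\mathscr{Z}$, it uses equidimensionality of $\mathscr{E}^D$ to deduce $\phi$ lies in a family of $l$-old points, hence is both $l$-old and $l$-new at $x$; an explicit matrix computation with $i^\dagger_x i_x$ then forces $T_l^2-(l+1)^2S_l$ to kill the underlying eigenform, and the level-raising Theorem~\ref{raise} (which packages the Ihara input) places $\phi$ in $\mathscr{Z}$, a contradiction.
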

\begin{proof}
Let $\phi$ be a point of $\mathscr{E}^D_{\mathrm{new}}\backslash \mathscr{Z}$ with weight $x$. The proof will proceed by showing that $\phi$ is also $l$-old, then raising the level at $\phi$, as in the previous theorem, to show that it lies in a family of $l$-new points.

We pick $d$, $r$ and $\alpha$ such that $\phi$ comes from a $r$-overconvergent automorphic form of slope $d$ and level $U=U_1(Np^\alpha)$. Now fix a closed ball in $\mathscr{W}$, containing the weight of $\phi$, which is small enough (note that `small enough' depends on $d$, $r$ and $\alpha$) to apply the local eigenvariety construction described in section 6.2 of \cite{Chenun}. Denote this ball by $X$. The point $x$ in $X$ corresponds to a maximal ideal $\m$ of $\OO(X)$. If we set $M=\OCV\s$ as before, then $\phi$ lies in a family corresponding to a Hecke eigenvector in $M$.

As in the previous subsection, we have a closed embedding $\mathscr{E}^D_{\mathrm{old}}\hookrightarrow \mathscr{E}^D$, where $\mathscr{E}^D_{\mathrm{old}}$ is a two-covering of the tame level $U$ reduced eigencurve for $D$, and the image of this embedding is the Zariski closure of the classical $l$-old points in $\mathscr{E}^D$, which also equals the space of $l$-old points in $\mathscr{E}^D$ (as is clear from applying the proof of Lemma \ref{OLD} to modules of overconvergent automorphic forms for $D$). Since the space $\mathscr{E}^D$ is equidimensional of dimension $1$, but $\phi$ does not lie in a family of points in $\mathscr{E}^D_{\mathrm{new}}$, $\phi$ must lie in a family of points in $\mathscr{E}^D_{\mathrm{old}}$. So $\phi$ is $l$-old and $l$-new.

We now `raise the level' at $\phi$. As in the proof of Proposition \ref{preihara} we specialise the $\OO(X)$ modules $L=\OC\s$ and $M$ at the maximal ideal $\m$ to give vector spaces $L_x$ and $M_x$. Since $\phi$ is $l$-new and $l$-old, it arises from an eigenform $g$ in $\mathrm{im}(i_x)\cap \ker (i^\dagger_x)$. Now a calculation using the explicit matrix for the map $i^\dagger_x i_x$ shows that $g$ is of the form $i_x(\alpha f,-f)$, where $f$ is an eigenform in $L_x$ with $(T_l^2-(l+1)^2S_l)f=0$, and the roots of the $l$th Hecke polynomial for $f$ are $\alpha$ and $l\alpha$ (note that with our normalisations $g$ is the $l$-stabilisation of $f$ corresponding to the root $\alpha$). Now applying the proof of Theorem \ref{raise} we see that $\phi$ lies in $\mathscr{Z}$, so we have a contradiction and therefore must have $\mathscr{E}^D_{\mathrm{new}}=\mathscr{Z}$.
\end{proof}

\subsubsection{Modules of newforms and the eigenvariety machine}Define the Banach $\OO(X)$-module $M_{X,r}^{\mathrm{new}}$ for varying affinoid subdomains $X \subset \mathscr{W}$ (with corresponding character $\kappa$) to be the kernel of the map $$i^\dagger: \OCNl\rightarrow \OCN\times\OCN.$$ One might wish to construct the rigid analytic space $\mathscr{E}^D_{\mathrm{new}}$ from these modules using Buzzard's eigenvariety machine \cite{Bu2}. The key issue is to show that the modules $M_{X,r}^{\mathrm{new}}$ behave well under base change between affinoid subdomains. The author is not sure whether this should be true or not - the results in this paper can be viewed as showing that these modules behave well under base change from a sufficiently small affinoid to a point and it is not clear that one can conclude something about base change between open affinoids from this.

\section{Acknowledgments}
This paper is the result of research carried out whilst studying for a PhD under the supervision of Kevin Buzzard, to whom I am grateful for suggesting the problem and providing such excellent guidance. I would also like to thank Owen Jones, David Loeffler and Kevin McGerty for their helpful comments, and the referee for several useful remarks and corrections.

\bibliography{dissbib}
\end{document}